\def\Xint#1{\mathchoice
{\XXint\displaystyle\textstyle{#1}}%
{\XXint\textstyle\scriptstyle{#1}}%
{\XXint\scriptstyle\scriptscriptstyle{#1}}%
{\XXint\scriptscriptstyle\scriptscriptstyle{#1}}%
\!\int}
\def\XXint#1#2#3{{\setbox0=\hbox{$#1{#2#3}{\int}$}
\vcenter{\hbox{$#2#3$}}\kern-.5\wd0}}
\def\dashint{\Xint-}
\def\avgB{\frac{1}{|B|}\int_B}
\def\avgBjko{\frac{1}{|{B_j^k}|}\int_{{B_j^k}}}
\def\avgBjk{\frac{1}{|\tilde{B_j^k}|}\int_{\tilde{B_j^k}}}
\def\R{\mathbb{R}}
\def\Z{\mathbb{Z}}
\def\D{\mathsf{D}}
\def\B{\mathcal{B}}
\def\N{\mathbb{N}}
\def\ra{\rightarrow}
\def\al{\alpha}
\def\ep{\epsilon}
\def\l{\lambda}
\def\m{\mu}
\def\s{\sigma}
\def\x{\chi}
\def\X{\Chi}
\def\W{\Omega}
\def\Sc{\mathcal S}
\def\1{\mathbbm{1}}
\def\X{\mathcal{X}}
\theoremstyle{plain}
\title{On the harmonic and geometric maximal operators}
\author{Linden Anne Duffee}
\address{Department of Mathematics\\
University of Alabama, Box 870350, 345 Gordon Palmer Hall.}\email{linden.duffee@gmail.com}
\author{Kabe Moen}
\address{Department of Mathematics\\
University of Alabama, Box 870350, 345 Gordon Palmer Hall.}\email{kabe.moen@ua.edu}
\subjclass[2010]{42B25, 42B35, 46E30}
\newtheorem{mydef}{Definition}
\newtheorem{theorem}{Theorem}[section]
\newtheorem{lemma}[theorem]{Lemma}
\begin{document}
\maketitle

\begin{abstract}
We examine the harmonic and geometric maximal operators defined for a general basis of open sets in $\R^n$.  We prove two weight norm inequalities for the harmonic maximal operator assuming testing conditions over characteristic functions of unions of sets from the basis.  We also prove a that a bumped two weight $A_p$-like condition is sufficient for the two weight boundedness of the harmonic maximal operator.  

\end{abstract}

\section{Introduction}
We will study the harmonic maximal operator
$$M_{-1} f(x)=\sup_{Q\ni x} \left(\,\dashint_Q |f|^{-1}\,\right)^{-1}$$
and the geometric maximal operator
$$M_0f(x)=\sup_{Q\ni x}\exp\left(\,\dashint_Q\log|f|\,\right)$$
where the notation $\dashint_Qf$ denotes the average $\frac{1}{|Q|}\int_Qf$ and the supremum is over all cubes in $\R^n$ with sides parallel to the coordinate axes that contain the given point $x$.  Below we will be interested more general bases.  In the definition of the harmonic maximal operator we will use the conventions that $1/0=\infty$ and $1/\infty=0$.  The harmonic and geometric maximal operators are related to the Hardy-Littlewood maximal operator in the same way that the harmonic, geometric, and arithmetic means are related: If $x_1,\ldots,x_n$ are positive numbers then
$$\left(\frac1n\sum_{k=1}^n x_k^{-1}\right)^{-1}\leq \exp\left(\frac1n\sum_{k=1}^n \log x_k\right)\leq \frac{1}{n}\sum_{k=1}^n x_k.$$
The same inequalities hold for integral averages, and in particular, we have the pointwise bound
$$M_{-1}f(x)\leq M_0 f(x)\leq Mf(x),$$
where $M$ is the Hardy-Littlewood maximal operator
$$Mf(x)=\sup_{Q\ni x}\,\dashint_Q |f|.$$

A weight is a non-negative locally integrable function.  Given $p$, $1<p<\infty$, we say that a weight belongs to the class $A_p$ if 
\begin{equation} \label{Ap} [w]_{A_p}=\sup_Q \left(\,\dashint_Q w\,\right)\left(\dashint_Q w^{-\frac{1}{p-1}}\,\right)^{p-1}<\infty.\end{equation}
The class $A_1$ is the set of weights such that 
$$Mw(x)\leq Cw(x) \qquad \text{a.e.}\ x.$$
Finally the class $A_\infty$ will be the union of all $A_p$ class but can also be defined by the constant 
\begin{equation} \label{Ainfty} [w]_{A_\infty}=\sup_Q\left(\,\dashint_Q w\,\right)\exp\left(-\dashint_Q \log w\,\right)<\infty.\end{equation}

We will be interested in weighted estimates for the operators $M_{-1}$ and $M_0$. The study of such estimates was initiated by Shi \cite{Shi} who proved that $M_0$ was bounded on $L^p(w)$ for any $p>0$ when $w\in A_\infty$.  Cruz-Uribe and Neugebauer \cite{CN1} were the first to study the harmonic maximal operator.  Actually, they were interested in the minimal operator:
$$\mathsf{m}f(x)=\inf_{Q\ni x}\,\dashint_Q |f|\,.$$
However, Cruz-Uribe \cite{CU} points out that the minimal operator is simply the harmonic maximal operator in disguise:
$$M_{-1}f=\mathsf{m}(|f|^{-1})^{-1}.$$
We now state the results in \cite{CN1} recast in terms of the harmonic maximal operator.  
\begin{theorem}[\cite{CN1}] Given a weight $w$ and $0<p<\infty$, the following are equivalent:
\begin{enumerate}
\item $w\in A_\infty$;
\item the operator is of weak-type $(p,p)$
$$w(\{x\in \R^n: M_{-1}f(x)>\lambda\})\leq \frac{C}{\lambda^p}\int_{\R^n}|f|^pw\,;$$
\item the operator is of strong-type $(p,p)$,
$$\int_{\R^n} (M_{-1}f)^pw\,dx\leq C\int_{\R^n}|f|^pw\,.$$
\end{enumerate} 
\end{theorem}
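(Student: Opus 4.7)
The plan is to establish the three-way cycle $(3)\Rightarrow(2)\Rightarrow(1)\Rightarrow(3)$. The easy implications go quickly: $(3)\Rightarrow(2)$ is Chebyshev's inequality, and for $(1)\Rightarrow(3)$ I would combine the pointwise bound $M_{-1}f\le M_0 f$ recorded in the introduction with Shi's theorem \cite{Shi} that $M_0:L^p(w)\to L^p(w)$ for every $p>0$ whenever $w\in A_\infty$.

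The content of the theorem is in $(2)\Rightarrow(1)$. Fix a cube $Q$ and test the weak-type inequality against
$$
f=w^{-1/(p+1)}\chi_Q,
$$
truncated in the usual way if $w$ vanishes on a set of positive measure. Since $|f|^{-1}=\infty$ on $Q^c$ by the convention $1/0=\infty$, for every $x\in Q$ one has
$$
M_{-1}f(x)\ \ge\ \Bigl(\,\avgQ w^{1/(p+1)}\,\Bigr)^{-1},
$$
while a direct computation gives $\int_{\R^n}|f|^p w=|Q|\,\avgQ w^{1/(p+1)}$. Plugging this into the weak-type estimate at any $\lambda$ slightly below the above lower bound and then letting $\lambda$ approach it, one obtains
$$
\avgQ w\ \le\ C\,\Bigl(\,\avgQ w^{1/(p+1)}\,\Bigr)^{p+1}.
$$
Setting $q:=1/(p+1)\in(0,1)$, this is a reverse Jensen inequality, and is one of the standard characterizations of $w\in A_\infty$. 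Equivalently, it says that $w^q$ satisfies a reverse H\"older inequality with exponent $1/q>1$; Gehring's self-improvement lemma then bumps this exponent past $1/q$, which in turn transfers to a genuine reverse H\"older inequality for $w$ itself, placing $w$ in $A_\infty$.

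The main obstacle is identifying the correct exponent $1/(p+1)$. The most obvious choice, $f=w^{-1/p}\chi_Q$, yields only $\avgQ w\le C(\avgQ w^{1/p})^p$; this is informative for $p>1$ but is an automatic consequence of Jensen's inequality for $0<p\le 1$, and so carries no information. Forcing the two appearances of $w$ (one coming from $|f|^{-1}$, the other from $|f|^p w$) to occur at the same exponent pins down the value $1/(p+1)$, and this single choice delivers a non-trivial reverse Jensen inequality uniformly in $p\in(0,\infty)$.
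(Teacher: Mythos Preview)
The paper does not give its own proof of this statement: it is quoted from \cite{CN1} as background, so there is nothing to compare against line by line. That said, your cycle $(3)\Rightarrow(2)\Rightarrow(1)\Rightarrow(3)$ is the natural one, and your use of $M_{-1}f\le M_0f$ together with Shi's bound for $M_0$ on $L^p(w)$ when $w\in A_\infty$ is exactly the mechanism the paper highlights in the introduction.

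Your argument for $(2)\Rightarrow(1)$ is correct. The test function $f=w^{-1/(p+1)}\chi_Q$ is the right one, and your computation yields $\avgQ w\le C\bigl(\avgQ w^{q}\bigr)^{1/q}$ with $q=1/(p+1)\in(0,1)$, which is precisely the statement $w^{q}\in RH_{1/q}$. Your Gehring step is also correct: self-improvement gives $w^{q}\in RH_{1/q+\epsilon}$, and combining this with Jensen's inequality $\avgQ w^{q}\le(\avgQ w)^{q}$ produces $\bigl(\avgQ w^{1+\epsilon q}\bigr)^{1/(1+\epsilon q)}\le C'\,\avgQ w$, i.e.\ $w\in RH_{1+\epsilon q}\subset A_\infty$. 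The remark about why the exponent $1/(p+1)$ rather than $1/p$ is needed when $0<p\le 1$ is a nice observation. One small point worth making explicit in a write-up: the truncation you allude to (replacing $w$ by $w+\delta$ or restricting to $\{w>0\}$) is needed so that $f^{-1}$ is genuinely locally integrable and the harmonic average is well defined; this is routine but should be said.
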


Two weight norm inequalities are significantly more difficult to prove for the operators $M_{-1}$ and $M_0$.  One reason is that the covering techniques break down for the geometric and harmonic averages.  Because of this, the best results are one dimensional results.  Working on the real line, Cruz-Uribe, Neugebauer, and Olesen \cite{CNO} were able to prove the following theorem.

\begin{theorem}[\cite{CNO}] \label{thm:twhar} Given a pair of weights $(u,v)$ and $0<p<\infty$ let $\sigma=v^{\frac{1}{p+1}}$.  The following are equivalent
\begin{enumerate}[(i)]
\item the pair of weights satisfies
$$\dashint_I u\,\leq C \left(\,\dashint_I \sigma\,\right)^{p+1}$$
for all intervals $I$;
\item the operator satisfies the weak-type inequality
$$u(\{x\in \R: M_{-1}f(x)>\lambda\})\leq \frac{C}{\lambda^p}\int_{\R}|f|^pv\,;$$
\item the operator satisfies the strong-type inequality
$$\int_{\R} (M_{-1}f)^pu\,\leq C\int_{\R}|f|^pv\,;$$
\item the operator satisfies the testing condition
$$\int_I (M_{-1}(\sigma^{-1}\1_I))^pu\,\leq C\int_I\sigma\,$$
for all intervals $I$.  
\end{enumerate} 
\end{theorem}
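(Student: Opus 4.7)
The plan is to establish the cycle (i)$\Rightarrow$(iii)$\Rightarrow$(ii)$\Rightarrow$(i) and then close the loop via (iii)$\Rightarrow$(iv)$\Rightarrow$(i). The trivial implications are handled quickly: (iii)$\Rightarrow$(ii) is Chebyshev, and (iii)$\Rightarrow$(iv) comes from plugging $f=\sigma^{-1}\1_I$ into the strong-type inequality, noting that $|f|^pv=\sigma^{-p}\sigma^{p+1}=\sigma$ on $I$ and then restricting the left-hand integral to $I$. For (ii)$\Rightarrow$(i) and (iv)$\Rightarrow$(i), the same test function works: taking the trial interval $J=I$ in the supremum defining $M_{-1}(\sigma^{-1}\1_I)$ yields the pointwise lower bound $M_{-1}(\sigma^{-1}\1_I)(x)\geq(\dashint_I\sigma)^{-1}$ for every $x\in I$, and feeding this into the hypothesis in each case produces $u(I)\leq C(\dashint_I\sigma)^p\sigma(I)$, which rearranges to (i).

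The substantive content lies in (i)$\Rightarrow$(iii). The first, purely algebraic, step is H\"older's inequality with exponents $(p+1)/p$ and $p+1$ applied to the factorization $\sigma=f^{-p/(p+1)}\cdot(f^{p/(p+1)}\sigma)$; since $(f^{p/(p+1)}\sigma)^{p+1}=f^p v$, one obtains
\[
\left(\dashint_I\sigma\right)^{p+1}\leq\left(\dashint_I f^{-1}\right)^p\dashint_I f^p v
\]
for every interval $I$. Combined with (i), this gives the interval-by-interval bound $u(I)\leq C(\dashint_I f^{-1})^p\int_I f^p v$.

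Next, fix $\lambda>0$ and write $\Omega_\lambda=\{M_{-1}f>\lambda\}$ as a disjoint union of maximal open subintervals $\{J_k\}$. The essential one-dimensional observation is that each finite boundary point of $J_k$ has $M_{-1}f\leq\lambda$, so no interval $I$ with $\dashint_I f^{-1}<1/\lambda$ can straddle $\partial J_k$; hence every $x\in J_k$ admits an interval $I_x\subset J_k$ containing $x$ with $\dashint_{I_x}f^{-1}<1/\lambda$. A Vitali selection on the line produces a subcover of $J_k$ with overlap at most two, and summing the above interval bound over this subcover yields the \emph{localized} weak-type estimate
\[
u(J_k)\leq\frac{C}{\lambda^p}\int_{J_k}f^p v,\qquad\text{and therefore}\qquad u(\Omega_\lambda)\leq\frac{C}{\lambda^p}\int_{\Omega_\lambda}f^p v.
\]

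The main obstacle is upgrading this localized weak-type bound to the full strong-type estimate: a naive layer-cake summation $\sum_k 2^{kp}u(\Omega_{2^k})$ picks up a spurious $\log M_{-1}f$ factor. The remedy is a principal interval (sparse/Carleson) stopping-time argument run on the nested tree formed by the components of $\Omega_{2^k}$ as $k$ varies over $\Z$: one selects a subfamily in which the mass $\int\cdot\,f^p v$ concentrates, uses the resulting Carleson property to bound the total selected mass by $C\int f^p v$, and then telescopes the discretized layer cake against the principal family using the localized estimate. The one-dimensional tree structure of the $\{J_k\}$ across levels is what makes this stopping-time construction feasible and is the reason the theorem remains confined to $\R$.
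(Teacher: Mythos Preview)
This theorem is only cited in the paper (from \cite{CNO}), not proved there; the closest thing to ``the paper's own proof'' is the argument for the dyadic analogue, Theorem~\ref{dyadic results}, and that argument takes a different route from yours.

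Your easy implications are all fine, and the H\"older factorization $\sigma=f^{-p/(p+1)}(f^{p/(p+1)}\sigma)$ together with the one--dimensional Vitali covering does yield the localized weak bound $u(\Omega_\lambda)\le C\lambda^{-p}\int_{\Omega_\lambda}f^pv$. The gap is the last paragraph. You correctly identify that a straight layer--cake integration of this localized bound produces a divergent $\int_0 \lambda^{-1}\,d\lambda$, and you then assert that a ``principal interval (sparse/Carleson) stopping--time argument'' on the tree of components of $\{\Omega_{2^k}\}_k$ repairs this. But you never specify the stopping rule, never verify a Carleson packing condition for the selected family, and never show how the discretized sum telescopes against it. If one tries to make this concrete---stopping, say, when $\int_{J'}f^pv$ drops below half of its parent value---the resulting family need not be Carleson (each generation can still carry total mass comparable to $\int f^pv$, so the sum over generations diverges). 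Absent a precise construction with a proof of the packing bound, the implication (i)$\Rightarrow$(iii) is not established.

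The way the paper (and, in effect, \cite{CNO}) avoids this obstacle is to abandon the direct route (i)$\Rightarrow$(iii) and instead close the loop via (i)$\Rightarrow$(iv)$\Rightarrow$(iii). The step (i)$\Rightarrow$(iv) has a short self--contained proof that sidesteps the logarithmic divergence entirely: fix an interval $I$, set $R=|I|/\sigma(I)$, and split
\[
\int_I M_{-1}(\sigma^{-1}\1_I)^p\,u
= p\int_0^R \lambda^{p-1}u(E_\lambda)\,d\lambda + p\int_R^\infty \lambda^{p-1}u(E_\lambda)\,d\lambda .
\]
The first piece is at most $u(I)R^p\le C\sigma(I)$ by (i) applied to $I$. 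For the second piece, the maximal subintervals $I_i^\lambda$ of $E_\lambda$ satisfy $\dashint_{I_i^\lambda}\sigma<\lambda^{-1}$, so (i) gives $u(I_i^\lambda)\le C|I_i^\lambda|\lambda^{-(p+1)}$ and hence $u(E_\lambda)\le C|I|\lambda^{-(p+1)}$; integrating in $\lambda$ yields $C|I|R^{-1}=C\sigma(I)$. With (iv) in hand, (iv)$\Rightarrow$(iii) is done by the Jawerth--type discretization argument the paper carries out in detail (the proof of Theorem~\ref{testingHM} and again in Theorem~\ref{dyadic results}). That route is what you should use in place of the unproven stopping--time step.
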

A remarkable aspect of Theorem \ref{thm:twhar} is that the two weight $A_p$-like condition (formally an $A_{-p}$ condition) is sufficient for the strong type boundedness.  This is in stark contrast to the two weight results for the geometric and Hardy-Littlewood maximal functions.  

Yin and Muckenhoupt \cite{YM} studied two weight norm inequalities for $M_0$ proving the following one dimensional results.  Alternatively, Cruz-Uribe and Neugebauer \cite{CN2} were able to prove two weight norm inequalities for $M_0$ on the real line by approximating $M_0$ from below with the operators 
$$M_{-r}f(x)=\sup_{Q\ni x}\left(\,\dashint_Q |f|^{-r}\,\right)^{-\frac1r},$$
as $r\ra 0^+$.  By taking a limiting argument in Theorem \ref{thm:twhar} they were able to obtain the following results, again on the real line.

 \begin{theorem}[\cite{CN2},\cite{YM}] Suppose $(u,v)$ is a pair of weights defined on $\R$ and $0<p<\infty$.  Then the weak-type norm inequality 
$$u(\{x\in \R:M_0f(x)>\lambda\})\leq \frac{C}{\lambda^p}\int_\R |f|^pv\,$$
holds for $f\in L^p(v)$ if and only if the pair $(u,v)$ satisfies the two weight $A_\infty$ condition
$$\sup_I \left(\dashint_I u\,\right)\exp\left(-\dashint_I \log v\,\right)<\infty.$$  Moreover, the strong-type inequality
$$\int_{\R}(M_0f)^pu\,\leq C\int_{\R}|f|^pv\,$$
holds if and only if the testing condition
$$\int_I M_0(v^{-1}\1_I)u\,\leq C|I|$$
holds for all intervals $I$.  
\end{theorem}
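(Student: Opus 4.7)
The plan is to approximate $M_0$ from below by the operators $M_{-r}$ and to reduce both equivalences to Theorem \ref{thm:twhar} applied to $M_{-1}$ with the shifted exponent $P = p/r$. Three pointwise identities are fundamental: the monotone convergence $M_{-r}f \uparrow M_0 f$ as $r \downarrow 0^+$ (from the monotonicity of $L^s$-means together with the standard limit $(\dashint_Q g^s)^{1/s} \to \exp(\dashint_Q \log g)$ as $s \to 0$), the substitution rule $(M_{-r}f)^r = M_{-1}(f^r)$, and the homogeneity $M_0(f^a) = (M_0 f)^a$ for $a > 0$.

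Both necessity directions will be handled by testing against $f = v^{-1/p}\1_I$. Since $M_0(v^{-1/p}\1_I)(x) \geq \exp(-\tfrac{1}{p}\dashint_I \log v)$ for $x \in I$, inserting this into the weak-type inequality and optimizing in $\lambda$ yields $\dashint_I u \leq C\exp(\dashint_I \log v)$, which is the two-weight $A_\infty$ condition. For the strong-type, the homogeneity gives $(M_0(v^{-1/p}\1_I))^p = M_0(v^{-1}\1_I)$ while $\int (v^{-1/p}\1_I)^p v = |I|$, so the strong-type inequality reduces immediately to the testing condition.

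Sufficiency of the $A_\infty$ condition for the weak-type follows from Jensen's inequality in the form $(\dashint_I v^{1/(P+1)})^{P+1} \geq \exp(\dashint_I \log v)$, which shows that the $A_\infty$ hypothesis implies condition (i) of Theorem \ref{thm:twhar} with the same constant uniformly in $P = p/r$. Transferring (i)$\Rightarrow$(ii) via the substitution $g = f^r$ produces a uniform weak-type bound for $M_{-r}$, and monotone convergence in $r \downarrow 0^+$, using $\{M_{-r}f > \lambda\} \uparrow \{M_0 f > \lambda\}$, then yields the desired inequality for $M_0$.

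The most delicate step is sufficiency of the testing condition for the strong-type. The natural approach is to show that the testing hypothesis for $M_0$ implies the testing condition (iv) of Theorem \ref{thm:twhar} for $M_{-1}$ with exponent $P = p/r$ uniformly in $r$; then (iv)$\Rightarrow$(iii) of Theorem \ref{thm:twhar} combined with monotone convergence in $r$ completes the argument. The conversion rests on the pointwise estimate $M_{-r}(v^{-1/(p+r)}\1_I) \leq M_0(v^{-1/(p+r)}\1_I) = (M_0(v^{-1}\1_I))^{1/(p+r)}$ followed by Hölder's inequality. The main obstacle is uniform control of constants as $P \to \infty$: naive applications of Hölder produce stray factors such as $u(I)^{r/(p+r)}$ and powers $(p+r)/r$ that either vanish or blow up in the limit. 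Resolving this will likely require a quantitative version of the equivalence in Theorem \ref{thm:twhar} with constants that do not degenerate with the exponent, or alternatively a Sawyer-type stopping-time decomposition argument adapted directly to $M_0$, bypassing the $M_{-r}$ approximation entirely.
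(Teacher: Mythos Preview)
The paper does not give its own proof of this statement; it is quoted from \cite{CN2} and \cite{YM}, with only the remark that Cruz-Uribe and Neugebauer obtained it ``by approximating $M_0$ from below with the operators $M_{-r}$\ldots\ [and] taking a limiting argument in Theorem~\ref{thm:twhar}.'' Your proposal follows exactly this route, so at the level of strategy you are aligned with the source the paper invokes. Your necessity arguments and the weak-type sufficiency sketch are correct in outline; the only omission there is that you do not verify that the constant in (i)$\Rightarrow$(ii) of Theorem~\ref{thm:twhar} is uniform in $P=p/r$, though on $\R$ this can indeed be tracked through the Calder\'on--Zygmund decomposition in \cite{CNO}.

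The genuine incompleteness is in the strong-type sufficiency, which you flag yourself. However, the obstacle is less severe than you suggest. The conversion of testing conditions \emph{can} be done uniformly: from $M_{-1}(\sigma_P^{-1}\1_I)^P \le \bigl(M_0(v^{-1}\1_I)\bigr)^{p/(p+r)}$ and H\"older you get
\[
\int_I M_{-1}(\sigma_P^{-1}\1_I)^P\,u \;\le\; (C|I|)^{p/(p+r)}\,u(I)^{r/(p+r)},
\]
and the ``stray factor'' $u(I)^{r/(p+r)}$ is not dangerous, because the $M_0$ testing hypothesis already implies the two-weight $A_\infty$ condition, whence $u(I)\le C|I|\bigl(\dashint_I \sigma_P\bigr)^{(p+r)/r}$; substituting this closes the estimate with a constant independent of $r$. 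What actually remains open in your write-up is whether (iv)$\Rightarrow$(iii) in Theorem~\ref{thm:twhar} holds with a constant that does not blow up as $P\to\infty$. This requires going into the proof in \cite{CNO}: the dependence on $P$ enters only through $\|M_{-1,\sigma}\|_{L^P(\sigma)\to L^P(\sigma)}$, which is bounded by $e^{1/P}\le e$, and through the level-set discretization factor $2^P$, which is rendered harmless by using scale $a^k$ with $a=1+1/P$ instead of $2^k$. You have identified the right pressure point but stopped short of resolving it.
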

We notice that the condition on the weights does not depend on $p$: this is to be expected since $M_0(f)^p=M_0(f^p)$ for $f\geq 0$.

When extending these results to higher dimensions or more general contexts one has to overcome serious difficulties.  One way to accomplish this is to assume doubling conditions on the weights.  A measure is doubling if $$\mu(2Q)\leq C\mu(Q)$$
for every cube $Q$ (here $2Q$ is the concentric cube with twice the sidelength of $Q$).   The smallest such $C$ will be called the doubling constant of $\mu$ and will be denoted $\mathsf{d}(\mu)$.  We can now state the higher dimensional results for $M_{-1}$ and $M_0$ both due to Cruz-Uribe \cite{CU}.

\begin{theorem}[\cite{CU}] Suppose $0<p<\infty$ and $(u,v)$ is a pair of weights such that either $u$ or $\sigma=v^{\frac{1}{p+1}}$ is a doubling weight.   The the following four conditions are equivalent
\begin{enumerate}[(i)]
\item the pair of weights satisfies
$$\dashint_Q u\,\leq C \left(\,\dashint_Q \sigma\,\right)^{p+1}$$
for all cubes $Q$ in $\R^n$;
\item the operator satisfies the weak-type inequality
$$u(\{x\in \R^n: M_{-1}f(x)>\lambda\})\leq \frac{C}{\lambda^p}\int_{\R^n}|f|^pv\,;$$
\item the operator satisfies the strong-type inequality
$$\int_{\R^n} (M_{-1}f)^pu\leq C\int_{\R^n}|f|^pv\,;$$
\item the operator satisfies the testing condition
$$\int_Q (M_{-1}(\sigma^{-1}\1_Q))^pu\leq C\int_Q\sigma\,$$
for all cubes $Q$.  
\end{enumerate} 
\end{theorem}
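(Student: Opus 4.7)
I plan to prove the cycle $\mathrm{(iii)}\Rightarrow\mathrm{(ii)}\Rightarrow\mathrm{(i)}\Rightarrow\mathrm{(iii)}$ together with the side arrow $\mathrm{(iii)}\Rightarrow\mathrm{(iv)}$; the doubling hypothesis enters only in the return arrow $\mathrm{(i)}\Rightarrow\mathrm{(iii)}$.

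\emph{The routine arrows.} Chebyshev's inequality delivers $\mathrm{(iii)}\Rightarrow\mathrm{(ii)}$.  For $\mathrm{(iii)}\Rightarrow\mathrm{(iv)}$, I apply the strong-type bound to $f=\sigma^{-1}\1_Q$: since $v=\sigma^{p+1}$, $\int|f|^p v=\int_Q\sigma^{-p}\sigma^{p+1}=\int_Q\sigma$, and restricting the left-hand integral to $Q$ yields (iv).  For $\mathrm{(ii)}\Rightarrow\mathrm{(i)}$, I test the weak-type inequality against the same $f=\sigma^{-1}\1_Q$; on $Q$, the cube $Q$ itself contributes $(\dashint_Q\sigma)^{-1}$ to the supremum defining $M_{-1}f$ (using the convention $1/0=\infty$ off $Q$), so choosing $\lambda$ just below $(\dashint_Q\sigma)^{-1}$ forces $u(Q)\leq C(\dashint_Q\sigma)^p\int_Q\sigma$, i.e., (i).

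\emph{The main implication $\mathrm{(i)}\Rightarrow\mathrm{(iii)}$.}  Fix $f\geq 0$ bounded with compact support and set $\Omega_k=\{M_{-1}f>a^k\}$ for a large base $a>1$.  For each $x\in\Omega_k$ pick a cube $Q_x\ni x$ with $\int_{Q_x}f^{-1}\leq a^{-k}|Q_x|$.  A principal-cubes stopping argument extracts a family $\{Q_j^k\}_{k,j}$ such that, at each level $k$, the cubes are essentially disjoint and cover $\Omega_k$, and the pieces $E_j^k\subseteq Q_j^k\cap(\Omega_k\setminus\Omega_{k+1})$ are pairwise disjoint over all $(k,j)$ and satisfy $|E_j^k|\geq c|Q_j^k|$ (for $a$ large enough).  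Then
\[
\int(M_{-1}f)^p u \leq C\sum_{k,j} a^{kp}u(Q_j^k).
\]
Using (i) in the form $u(Q_j^k)\leq C\sigma(Q_j^k)^{p+1}/|Q_j^k|^p$ together with the selection bound $a^{kp}|Q_j^k|^{-p}\leq \bigl(\int_{Q_j^k}f^{-1}\bigr)^{-p}$ yields $a^{kp}u(Q_j^k)\leq C\sigma(Q_j^k)^{p+1}/\bigl(\int_{Q_j^k}f^{-1}\bigr)^p$.  The critical H\"older inequality with exponents $p+1$ and $(p+1)/p$, using $\sigma^{p+1}=v$,
\[
\int_Q\sigma = \int_Q\bigl(f^{\frac{p}{p+1}}\sigma\bigr)\bigl(f^{-\frac{p}{p+1}}\bigr) \leq \Bigl(\int_Q f^p v\Bigr)^{\frac{1}{p+1}}\Bigl(\int_Q f^{-1}\Bigr)^{\frac{p}{p+1}},
\]
supplies $\sigma(Q)^{p+1}/\bigl(\int_Q f^{-1}\bigr)^p\leq\int_Q f^p v$, whence $a^{kp}u(Q_j^k)\leq C\int_{Q_j^k}f^pv$.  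The doubling hypothesis is then invoked (doubling of $\sigma$, hence of $v=\sigma^{p+1}$; or symmetrically of $u$ combined with the $A_{-p}$ condition (i)) to replace $\int_{Q_j^k}f^pv$ by $C\int_{E_j^k}f^pv$ via the comparability $|E_j^k|\geq c|Q_j^k|$, after which disjointness of the $E_j^k$ closes the estimate to give $\int(M_{-1}f)^p u\leq C\int f^p v$.

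\emph{Main obstacle.}  The principal-cubes decomposition is the delicate point.  Unlike the Hardy--Littlewood maximal function, the harmonic average $(\dashint f^{-1})^{-1}$ is not obviously monotone under cube inclusion, so the stopping-time selection must be tuned (in particular, the base $a$ must be taken large) to guarantee simultaneously the covering property and the fatness $|E_j^k|\geq c|Q_j^k|$.  Moreover, the $f$-dependence in $\int_{Q_j^k}f^p v$ precludes any cube-to-piece replacement without a doubling assumption on the weights; it is precisely this final disjointification that fails in higher dimensions without additional structure and that the hypothesis on $u$ or $\sigma$ restores.
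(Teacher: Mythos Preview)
This theorem is quoted from \cite{CU} and is not proved in the present paper (the paper's own proofs concern the general-basis testing result and the dyadic analogue, where doubling is not needed). Judging your proposal on its own merits, there are two gaps.

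\emph{A minor logical gap.} Your scheme $\mathrm{(iii)}\Rightarrow\mathrm{(ii)}\Rightarrow\mathrm{(i)}\Rightarrow\mathrm{(iii)}$ together with $\mathrm{(iii)}\Rightarrow\mathrm{(iv)}$ never closes the loop through (iv), so (iv) is not yet shown equivalent to the rest. This is easily repaired: $\mathrm{(iv)}\Rightarrow\mathrm{(i)}$ follows because on $Q$ one has $M_{-1}(\sigma^{-1}\1_Q)\geq(\dashint_Q\sigma)^{-1}$, whence the testing bound gives $u(Q)(\dashint_Q\sigma)^{-p}\leq C\sigma(Q)$, i.e.\ (i).

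\emph{A substantive gap in $\mathrm{(i)}\Rightarrow\mathrm{(iii)}$.} Your H\"older step $\sigma(Q)^{p+1}\leq\bigl(\int_Q f^p v\bigr)\bigl(\int_Q f^{-1}\bigr)^p$ is correct and delivers $a^{kp}u(Q_j^k)\leq C\int_{Q_j^k}f^p v$. The failure is the last move. Doubling of a weight $\mu$ means $\mu(2Q)\leq C\mu(Q)$; it does \emph{not} imply $\int_{Q}g\,d\mu\leq C\int_{E}g\,d\mu$ for an arbitrary $g\geq 0$ and a subset $E\subset Q$ with $|E|\geq c|Q|$ --- just take $g$ supported on $Q\setminus E$. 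So you cannot replace $\int_{Q_j^k}f^p v$ by $\int_{E_j^k}f^p v$ this way, and the sum over $(j,k)$ does not collapse. (Incidentally, doubling of $\sigma$ does not in general transfer to $v=\sigma^{p+1}$ either.)

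In Cruz-Uribe's argument, and in the paper's dyadic version (Theorem~\ref{dyadic results}), the route is $\mathrm{(i)}\Rightarrow\mathrm{(iv)}\Rightarrow\mathrm{(iii)}$ rather than a direct $\mathrm{(i)}\Rightarrow\mathrm{(iii)}$: condition (i) yields the single-cube testing bound by a distribution-function computation on $\sigma^{-1}\1_Q$ (splitting at $\lambda=|Q|/\sigma(Q)$), and then (iv) is upgraded to (iii) by a Jawerth-type linearization that terminates in the boundedness of $M_{-1,\sigma}$ on $L^p(\sigma)$. The doubling hypothesis enters to make the Calder\'on--Zygmund covering work over the basis of all cubes in these steps; it is not a device for swapping $Q_j^k$ for $E_j^k$ inside an $f$-dependent integral.
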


This situation is even worse for the geometric maximal operator.  In this case the weight $\sigma$ which depends on $p$ must have a bounded doubling constant as $p\ra \infty$ in order for the strong two weight norm inequalities to hold.  

\begin{theorem}[\cite{CU}] Suppose $0<p<\infty$ and $(u,v)$ is a pair of weights such that either the weight $u$ is doubling or the weight $\sigma_q=v^{\frac{1}{q+1}}$ is a doubling weight for all sufficiently large $q$ and
$$\limsup_{q\ra \infty} 2^{-nq}\mathsf{d}(\sigma_q)^{p+1}<\infty.$$
Then the weak-type inequality 
$$u(\{x\in \R^n:M_0f(x)>\lambda \})\leq \frac{C}{\lambda^p}\int_{\R^n}|f|^pv\,$$
holds for $f\in L^p(v)$ if and only if the pair of weights satisfies the two weight $A_\infty$ condition
$$\sup_Q \left(\dashint_Q u\,dx\right)\exp\left(-\dashint_Q \log v\,\right)<\infty$$
where the supremum is over all cubes in $\R^n$. 
\end{theorem}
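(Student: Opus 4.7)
My plan is to split the proof into necessity (weak-type $\Rightarrow A_\infty$) and sufficiency ($A_\infty \Rightarrow$ weak-type); only the latter will require the doubling hypotheses. Necessity is the easier direction: for a fixed cube $Q$, I would test the weak-type inequality on $f=v^{-1/p}\1_Q$, with standard truncations if $v$ vanishes or blows up. Since $M_0f(x)\ge\exp\bigl(-\tfrac{1}{p}\dashint_Q\log v\bigr)$ on $Q$, choosing $\l$ just below this pointwise lower bound forces $Q\subset\{M_0f>\l\}$; combined with $\int|f|^pv=|Q|$, the weak-type hypothesis delivers $\dashint_Q u\le 2^pC\exp\bigl(\dashint_Q\log v\bigr)$, which is exactly the $A_\infty$ condition.

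For sufficiency under the first hypothesis ($u$ doubling) one can argue directly without any limiting procedure. Cover $\{M_0f>\l\}$ by a Vitali selection of disjoint cubes $Q_j$ on each of which $\exp\bigl(\dashint_{Q_j}\log|f|\bigr)>\l$; doubling of $u$ then yields $u(\{M_0f>\l\})\le C\sum_j u(Q_j)$ for a constant $C$ depending on $\mathsf{d}(u)$ and $n$. The $A_\infty$ hypothesis supplies $u(Q_j)\le A|Q_j|\exp\bigl(\dashint_{Q_j}\log v\bigr)$. Now rewrite the pointwise bound as
$$\l^p\exp\Bigl(\dashint_{Q_j}\log v\Bigr)<\exp\Bigl(\dashint_{Q_j}\log(|f|^pv)\Bigr)\le\dashint_{Q_j}|f|^pv,$$
the last inequality being Jensen applied to $\log$. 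Substituting converts $u(Q_j)$ into $A\l^{-p}\int_{Q_j}|f|^pv$, and summing over the disjoint $Q_j$ closes the estimate.

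For the second hypothesis ($\s_q$ doubling for large $q$ together with the limsup condition), the idea is to approximate $M_0$ from below by $M_{-r}$ as $r\to 0^+$. Monotonicity of $p$-means gives the pointwise convergence $M_{-r}f\nearrow M_0f$, hence the level sets $\{M_{-r}f>\l\}$ increase to $\{M_0f>\l\}$. Jensen's inequality propagates the $A_\infty$ condition to
$$\dashint_Q u\le A\left(\dashint_Q \s_{p/r}\right)^{(p+r)/r}$$
for every $r>0$, with the \emph{same} constant $A$. The substitutions $g=|f|^r$, $\mu=\l^r$ identify weak-type $(p,p)$ for $M_{-r}$ with weak-type $(p/r,p/r)$ for $M_{-1}$, so the higher-dimensional theorem for $M_{-1}$ (applicable since $\s_{p/r}$ is doubling for sufficiently small $r$) produces a weak-type inequality for $M_{-r}$ with some constant $C_r$. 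Passing to the monotone limit $r\to 0^+$ then yields the weak-type inequality for $M_0$, provided $\sup_r C_r<\infty$.

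The main obstacle is this last uniformity in $r$. One must track how the constant in the $M_{-1}$ theorem depends on $\mathsf{d}(\s_{p/r})$ and on the exponent $p/r$; opening the proof of that theorem one expects covering-lemma contributions of order $\mathsf{d}(\s_{p/r})^{(p/r)+1}$ together with geometric factors of order $2^{n((p/r)+1)}$. The hypothesis $\limsup_{q\to\infty}2^{-nq}\mathsf{d}(\s_q)^{p+1}<\infty$ is tailored precisely to keep the product $2^{-nq}\mathsf{d}(\s_q)^{p+1}$ bounded as $q=p/r\to\infty$, after which the monotone limit completes the argument.
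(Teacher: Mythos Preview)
This theorem is not proved in the present paper; it is stated in the introduction as a known result of Cruz-Uribe \cite{CU}, serving as background for the paper's own contributions on general bases. There is therefore no in-paper proof to compare your proposal against.

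That said, your outline is consistent with the strategy the paper attributes to \cite{CN2} and \cite{CU}: necessity via testing on $f=v^{-1/p}\1_Q$, and sufficiency in the $\s_q$-doubling case via approximation of $M_0$ from below by the operators $M_{-r}$ (cf.\ the paper's Lemma~\ref{approx} and the discussion preceding Theorem~\ref{thm:twhar}) followed by an appeal to the higher-dimensional two-weight $M_{-1}$ theorem and a limit as $r\to 0^+$. Your necessity argument and the $u$-doubling case are correct and standard. One caution on the $\s_q$-doubling case: your heuristic for the constant $C_r$ predicts a factor $\mathsf{d}(\s_{p/r})^{(p/r)+1}$, whereas the actual hypothesis carries the fixed exponent $\mathsf{d}(\s_q)^{p+1}$. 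To reconcile this you would have to open the proof in \cite{CU} and track precisely how the $M_{-1}$ weak-type constant depends on the exponent and on the doubling constant of $\s_q$; the combination $2^{-nq}\mathsf{d}(\s_q)^{p+1}$ is what emerges from that bookkeeping, not the one your heuristic suggests.
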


\begin{theorem}[\cite{CU}] Suppose $0<p<\infty$ and $(u,v)$ is a pair of weights such that the weight $\sigma_q=v^{\frac{1}{q+1}}$ is a doubling weight for all sufficiently large $q$ and
$$\limsup_{q\ra \infty} \mathsf{d}(\sigma_q)<\infty.$$
Then the strong-type inequality 
$$\int_{\R^n}M_0f^pu\,\leq C\int_{\R^n}|f|^pv\,$$
holds for $f\in L^p(v)$ if and only if the pair of weights satisfies the testing condition
$$\int_Q (M_0(v^{-1}\1_Q))^pu\leq C|Q|$$
holds for all cubes in $\R^n$. 
\end{theorem}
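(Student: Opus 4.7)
The plan is to prove sufficiency by approximating $M_0$ from below by $\{M_{-r}\}_{r>0}$ and reducing to the $M_{-1}$-theorem already stated, with the doubling hypothesis $\limsup_q\mathsf{d}(\sigma_q)<\infty$ supplying the uniformity in $q=p/r$ needed to pass to the limit $r\to 0^+$. Necessity follows from the strong-type inequality by inserting an appropriate power of $v^{-1}\1_Q$ as test function and applying the identity $(M_0 g)^p = M_0(g^p)$ valid for $g\geq 0$.

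For sufficiency, first I would observe that $M_{-r}f \nearrow M_0 f$ pointwise as $r \to 0^+$; this follows from the monotonicity in $r$ of the $L^{-r}$-averages (an instance of Jensen's inequality). By monotone convergence, it therefore suffices to establish
$$\int (M_{-r}f)^p u \leq C \int |f|^p v$$
with $C$ independent of $r$ for all sufficiently small $r>0$. The substitution $g = f^r$ together with the identity $(M_{-1}g)^{1/r} = M_{-r}(g^{1/r})$ converts this into the strong-type $(q,q)$ inequality for $M_{-1}$ with $q = p/r$, which by the $M_{-1}$-theorem (applicable because $\sigma_q$ is doubling for large $q$) is equivalent to the testing condition
$$\int_Q (M_{-1}(\sigma_q^{-1}\1_Q))^q u \leq C\int_Q \sigma_q.$$

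Next I would derive this testing from the given $M_0$-testing. Using the pointwise bound $M_{-1}\leq M_0$ together with the identity $M_0(v^{-s}\1_Q) = M_0(v^{-1}\1_Q)^s$ (valid on $Q$, since cubes straying outside $Q$ contribute only $0$ to the geometric mean),
$$M_{-1}(\sigma_q^{-1}\1_Q)^q \leq M_0(v^{-1}\1_Q)^{q/(q+1)}.$$
H\"older's inequality with exponent $p(q+1)/q$ then splits the integral over $Q$ into a factor controlled by the given testing $\int_Q M_0(v^{-1}\1_Q)^p u \leq C|Q|$ and a power of $u(Q)$. The $u(Q)$ factor is in turn controlled via the two-weight $A_\infty$-type consequence $\dashint_Q u \leq C\exp(p\dashint_Q \log v)$ of the testing (extracted from the pointwise lower bound $M_0(v^{-1}\1_Q)\geq \exp(-\dashint_Q\log v)$ on $Q$), combined with Jensen's inequality $\exp(p\dashint_Q\log v) \leq (\dashint_Q \sigma_q)^{p(q+1)}$.

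The main obstacle is keeping the constants uniform as $q\to\infty$. The constant in the $M_{-1}$-theorem depends on $\mathsf{d}(\sigma_q)$, which is uniformly bounded by hypothesis; and the exponent balancing in the H\"older--Jensen chain must be arranged so that the resulting numerical constant stabilizes as $q\to\infty$. This is precisely where the hypothesis $\limsup_q \mathsf{d}(\sigma_q)<\infty$ plays its decisive role and where the bulk of the technical work lies; once uniformity is secured, the monotone-convergence limit $r\to 0^+$ closes the argument.
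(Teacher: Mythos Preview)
The paper does not prove this theorem. It is stated in the introduction as a result of Cruz-Uribe \cite{CU} and is quoted only to provide context for the paper's own theorems; no proof appears anywhere in the manuscript. Consequently there is no ``paper's own proof'' against which to compare your proposal.

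That said, your overall strategy---approximate $M_0$ from below by $M_{-r}$, rescale to reduce to the strong $(q,q)$ inequality for $M_{-1}$ with $q=p/r$, and invoke the $M_{-1}$ theorem with the uniform doubling bound on $\sigma_q$ controlling the constants---is precisely the method Cruz-Uribe uses in \cite{CU}, and the paper explicitly alludes to this limiting technique (see the discussion preceding Lemma~\ref{approx} and Theorems~\ref{geomdyadweak}--\ref{geomdyadstrong}). So at the level of the main idea your proposal is on the right track.

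One caution about your H\"older/Jensen step. From $M_{-1}(\sigma_q^{-1}\1_Q)^q\le M_0(v^{-1}\1_Q)^{q/(q+1)}$ and the testing hypothesis one can indeed extract the two-weight $A_\infty$ bound $\dashint_Q u\le C\exp(p\,\dashint_Q\log v)$, but combining this via Jensen with $\exp(\dashint_Q\log v)\le(\dashint_Q\sigma_q)^{q+1}$ yields $\dashint_Q u\le C(\dashint_Q\sigma_q)^{p(q+1)}$, whereas the $A_{-q}$ condition needed for Theorem~1.4 requires exponent $q+1$. For $p\ne 1$ the exponents do not match, and the discrepancy does not vanish as $q\to\infty$. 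In \cite{CU} this is handled by working instead with the testing form (iv) of Theorem~1.4 and tracking the constants in the implication (iv)$\Rightarrow$(iii) directly, rather than passing through condition (i); the exponent bookkeeping in your sketch would need to be reworked along those lines.
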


\section{Preliminaries and Main Results}
We will study the harmonic and geometric operators with respect to a general basis of open sets.  By a basis we mean a collection $\mathcal{B}$ of bounded open sets in $\R^n$.  The most well known bases are the following:
\begin{enumerate}[(i)]
\item $\mathcal B=\mathsf Q$, the basis of all cubes with sides parallel to the axes;
\item $\mathcal B=\mathsf D$, the basis of all dyadic cubes from a fixed dyadic grid;
\item $\mathcal B=\mathsf R$, the basis of all rectangles. 
\end{enumerate}

We define Hardy-Littlewood maximal operator with respect to a general basis as
$$M^\B f(x)=\sup_{\substack{B\in \B \\ x\in B}}\,\dashint_B |f|\,.$$
The classes $A^\B_p$ and $A^\B_\infty$ will denote the $A_p$ and $A_\infty$ classes with respect the basis $\mathcal B$.  They are defined similarly to  \eqref{Ap} and \eqref{Ainfty} except with the supremum over all sets from the basis $\B$ instead of the $\mathsf Q$.  We say that a weight satisfies condition $\mathsf A$ if there exists constants $0<\al<1$ and $c=c(\al)$ such that
\begin{equation}\label{condA} w(\{x\in \R^n:M^\B(\1_E)(x)>\al\})\leq c\,w(E)\end{equation}
for all measurable sets $E$. Condition \textsf{A} was introduced in \cite{P2} and can be thought of as a restricted weak-type inequality.   It was believed to be weaker than $A^\B_\infty$, however, recently in \cite{HP} (see also \cite{HP2} and \cite{HLP}) it is shown that condition \textsf{A} is equivalent to $A_\infty^\B$ for several bases such as $\mathsf R$.   We also refer readers to the manuscript \cite{DMO} for other equivalent definitions of $A_\infty^\B$.  Finally, we say that $\B$ is a Muckenhoupt basis if for each $p$, $1<p<\infty$ and every $w\in A_p^\B$, $M^\B$ is bounded on $L^p(w)$.  P\'erez \cite{P2} proved that $\B$ is a Muckenhoupt basis if and only if for each $p$, $1<p<\infty$, and $w\in A_\infty^\B$ the weighted maximal operator
$$M^\B_w f(x)=\sup_{\substack{B\in \B \\ x\in B}}\frac{1}{w(B)}\int_B|f|w\,,$$
is bounded on $L^p(w)$. 

The study of $M^\B$ goes back to Zygmund who proved bounds for the basis of rectangles. Jawerth \cite{J} gave a systematic study of the one weight and two weight inequalities for the Hardy-Littlewood maximal operator with respect to a general basis.  We will use the convention to tuck the weight into the operator.  Namely the inequality
$$\int_{\R^n} (M^\B f)^pu\,\leq C\int_{\R^n} |f|^pv\,dx$$
is equivalent to the inequality 
\begin{equation}\label{tucked} \int_{\R^n} M^\B(f\sigma)^pu\,\leq C\int_{\R^n} |f|^p\sigma\,\end{equation}
where $\sigma=v^{1-p'}$.  The advantage is that the latter inequality make sense for general measures $\sigma$.  Notice that inequality \eqref{tucked} is equivalent to the maximal operator $M^\B(\sigma\,\cdot\,)$ being bounded from $L^p(\sigma)$ to $L^p(u)$.

\begin{theorem}[\cite{J}] Suppose $1<p<\infty$, $\B$ is a basis, and $(u,\sigma)$ is a pair of weights such that $M_\sigma^\B:L^p(\sigma)\ra L^p(\sigma)$.   Then the following inequality
$$\int_{\R^n} M^\B(f\sigma)^pu\,\leq C\int_{\R^n} |f|^p\sigma\,dx $$
holds for all $f\in L^p(\sigma)$ if and only if the testing condition 
$$\int_FM^\B(\1_F\sigma)^p\,u\leq C\sigma(F)$$
holds for all finite unions $F$ of sets in $\B$. 
\end{theorem}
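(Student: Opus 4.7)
The necessity is immediate: substituting $f = \1_F$ in the assumed norm inequality recovers the testing bound. For sufficiency I would follow a Sawyer/Jawerth-style discretization, with the twist that the hypothesis $M_\sigma^\B : L^p(\sigma) \to L^p(\sigma)$ plays the role that the dyadic sparse lemma plays in the modern treatment of two-weight maximal inequalities.

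Assume $f \geq 0$ is bounded with compact support, fix $a > 1$ (to be chosen), and consider the level sets $\Omega_k = \{x : M^\B(f\sigma)(x) > a^k\}$. A standard discretization gives
\[ \int_{\R^n}(M^\B(f\sigma))^p u \leq a^p \sum_{k\in\Z} a^{kp}\, u(\Omega_k \setminus \Omega_{k+1}). \]
For each $x \in \Omega_k$ pick some $B \in \B$ containing $x$ with $\dashint_B f\sigma > a^k$; extracting a countable subcover, then truncating to a finite one using that $u(\Omega_k) < \infty$, yields a finite family $\{B_j^k\}_j \subset \B$ whose union $F_k := \bigcup_j B_j^k$ is a finite union of basis sets containing the given truncation of $\Omega_k$, with $\dashint_{B_j^k} f\sigma > a^k$ for each $j$. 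Writing $\dashint_B f\sigma = \frac{\sigma(B)}{|B|}\langle f \rangle_B^\sigma$ where $\langle f\rangle_B^\sigma := \sigma(B)^{-1}\int_B f\sigma$, and noting that $\frac{\sigma(B_j^k)}{|B_j^k|} \leq M^\B(\1_{F_k}\sigma)(x)$ whenever $x \in B_j^k \subset F_k$, we obtain the pointwise bound
\[ a^k \leq M^\B(\1_{F_k}\sigma)(x) \cdot \langle f\rangle_{B_j^k}^\sigma \qquad (x \in B_j^k). \]

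To finish, introduce a principal stopping-time across the doubly-indexed covering $\{B_j^k\}_{j,k}$, selecting a subcollection $\mathcal{P}$ by the rule that $\langle f\rangle_B^\sigma$ must increase by a fixed multiplicative factor from a principal set to each of its principal descendants. The non-principal contributions have $\sigma$-averages comparable to those of their principal ancestor and are therefore pointwise majorized by a constant multiple of $M_\sigma^\B f$; the hypothesized boundedness $M_\sigma^\B : L^p(\sigma) \to L^p(\sigma)$ then absorbs their total contribution into $C\|f\|_{L^p(\sigma)}^p$. For the principal contribution, raise the pointwise bound above to the $p$-th power, integrate against $u$, and apply the testing condition on each $F_k$; the resulting double sum collapses via the $\sigma$-Carleson (sparsity) property of $\mathcal{P}$, which provides disjointified pieces $E_B \subset B$ with $\sigma(E_B) \gtrsim \sigma(B)$, so that $\sum_{B\in\mathcal P}\langle f\rangle_B^{\sigma,p}\sigma(E_B) \lesssim \|f\|_{L^p(\sigma)}^p$. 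The main technical obstacle is arranging the principal stopping-time in the absence of any dyadic or nested structure on $\B$: one must set up the principal/non-principal dichotomy by an abstract maximality argument across the full covering, and justify the pointwise domination of the non-principal part by $M_\sigma^\B f$ using only the properties of the basis — this is precisely where the $L^p(\sigma)$-boundedness hypothesis on $M_\sigma^\B$ is indispensable, substituting for sparse-domination.
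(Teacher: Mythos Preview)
Your proposal has a genuine gap. The principal stopping-time you sketch requires a notion of ``descendants'' among the $B_j^k$ --- some nested or tree structure --- which a general basis $\B$ simply does not supply. Sets in $\B$ may overlap arbitrarily, so there is no maximality argument producing a sparse subcollection $\mathcal P$ with pairwise disjoint $E_B\subset B$ satisfying $\sigma(E_B)\gtrsim\sigma(B)$; that Carleson packing is precisely what breaks once you leave dyadic or nested settings. You flag this as a ``technical obstacle,'' but it is in fact fatal to this line of attack: the step ``set up the principal/non-principal dichotomy by an abstract maximality argument'' cannot be carried out, and the pointwise domination of the non-principal part by $M_\sigma^\B f$ has no meaning without a parent/child relation.

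Jawerth's actual argument --- which the paper reproduces for the harmonic analogue $M_{-1}^\B$ --- sidesteps all of this with a distribution-function trick. After covering the level sets by $\{B_j^k\}$ and disjointifying into $E_j^k\subset B_j^k$, one writes the discretized sum as $\int_\X F\,d\mu$ on the index space $\X=\N\times\Z$, with $F(j,k)=\big(\langle f\rangle_{B_j^k}^\sigma\big)^p$ and $\mu(j,k)=u(E_j^k)\big(\sigma(B_j^k)/|B_j^k|\big)^p$, and then as $\int_0^\infty \mu(\{F>\lambda\})\,d\lambda$. For each $\lambda$ the union $G(\lambda)=\bigcup_{F(j,k)>\lambda}B_j^k$ is a finite union of basis sets, so the testing hypothesis yields $\mu(\{F>\lambda\})\leq C\sigma(G(\lambda))$; and $G(\lambda)\subset\{(M_\sigma^\B f)^p>\lambda\}$ directly from the definition of $F$. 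Integrating in $\lambda$ gives $\|M_\sigma^\B f\|_{L^p(\sigma)}^p$, and the hypothesis on $M_\sigma^\B$ closes the estimate. The point is that testing is applied to unions indexed by the \emph{value of $\langle f\rangle^\sigma$} (the level sets $G(\lambda)$), not by the scale $k$ (your $F_k$); this reorganization is what makes stopping times and sparsity unnecessary.
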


In general the two weight $A_p$ condition 
$$\sup_{B\in \B}\left(\dashint_B u\,\right)^{1/p}\left(\dashint_B \sigma\,\right)^{1/p'}<\infty,$$
is necessary but not sufficient for the boundedness $M^\B:L^p(v)\ra L^p(u)$.  P\'erez showed a stronger condition, one made by bumping up the average on $\sigma$, is sufficient for the two weight boundedness of $M^\B$.  P\'erez \cite{P1} proved the following two weight bump result in the vein of this paper. 
\begin{theorem}[\cite{P1}] Suppose $1<p<\infty$ and $\B$ is a basis such that $M^\B$ is bounded on $L^s(\R^n)$ for all $1<s<\infty$.  If $(u,\sigma)$ is a pair of weights such that $u$ satisfies condition $\mathsf A$ and there exists $r>1$ such that the bumped $A_p$ condition
$$\sup_{B\in \B} \left(\,\dashint_B u\,\right)^{1/p}\left(\,\dashint_B \sigma^r\,\right)^{1/rp'}<\infty $$
holds, then the inequality
$$\int_{\R^n}M^\B(f\sigma)^pu\,dx\leq C\int_{\R^n} |f|^p\sigma\,$$
holds for all $f\in L^p(\sigma)$. 
\end{theorem}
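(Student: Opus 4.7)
My plan follows the general strategy of P\'erez \cite{P1} for bump theorems: linearize $M^\B(f\sigma)$ and reduce to a ball-by-ball H\"older estimate that exhausts the bump, then control the overlap of the resulting ball family using condition $\mathsf{A}$ and the $L^s$-boundedness of $M^\B$.

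The algebraic heart of the proof is the following observation. By H\"older with exponents $(p,p')$ followed by Jensen's inequality $\dashint_B \sigma \leq (\dashint_B \sigma^r)^{1/r}$,
$$\dashint_B f\sigma \leq \left(\,\dashint_B f^p \sigma\right)^{1/p}\left(\,\dashint_B \sigma\right)^{1/p'} \leq \left(\,\dashint_B f^p \sigma\right)^{1/p}\left(\,\dashint_B \sigma^r\right)^{1/(rp')}.$$
Raising this to the $p$-th power, multiplying by $\dashint_B u$, and using the bumped $A_p$ hypothesis in the form $(\dashint_B u)(\dashint_B \sigma^r)^{p/(rp')} \leq K^p$ to absorb the $\sigma^r$-factor, yields the ball-by-ball estimate
$$\left(\,\dashint_B f\sigma\right)^p u(B)\leq K^p \int_B f^p\sigma,$$
valid for every $B \in \B$.

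Next I would linearize: for each $x$, choose $B_x \in \B$ with $x\in B_x$ and $\dashint_{B_x} f\sigma\geq \tfrac12 M^\B(f\sigma)(x)$. Introducing the level sets $\Omega_k=\{M^\B(f\sigma)>2^k\}$ and using the $L^s$-boundedness of $M^\B$ to extract from $\{B_x\}_{x\in\Omega_k}$ a countable cover $\{B_k^j\}_j$ of $\Omega_k$ satisfying $\dashint_{B_k^j} f\sigma > 2^{k-1}$, a layer-cake argument together with the ball-by-ball estimate reduces matters to proving
$$\int_{\R^n} N(x)\, f(x)^p\, \sigma(x)\,dx \leq C \int_{\R^n} f^p \sigma\,dx, \qquad N(x) := \sum_{k,j}\1_{B_k^j}(x).$$

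The remaining step is the overlap control of $N$. For a general basis no Vitali or Besicovitch covering lemma is available, so a bounded-overlap sub-cover cannot be extracted purely geometrically. Instead, I would invoke condition $\mathsf{A}$ on $u$ (equivalent to $u\in A_\infty^\B$ for the relevant bases by \cite{HP}) together with $M^\B : L^s \to L^s$ to refine the selection: at each fixed level $k$ a basis-adapted selection procedure (whose convergence is guaranteed by the $L^s$-bound) yields a sub-cover with bounded overlap, and the sum over $k$ telescopes using the geometric decay $u(\Omega_{k+1}) \leq \theta\, u(\Omega_k)$ provided by condition $\mathsf{A}$. This last step is the principal obstacle: in the absence of classical covering lemmas for a general $\B$, one must substitute measure-theoretic sparseness (derived from condition $\mathsf{A}$) for geometric sparseness, and balancing the basis-adapted selection with the summation in $k$ is the technical crux of the argument.
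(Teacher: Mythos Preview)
The paper does not prove this statement --- it is quoted as P\'erez's result \cite{P1} --- but it proves the harmonic analogue (Theorem~\ref{bumpHM}) by exactly P\'erez's method, so that is the benchmark.

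Your ball-by-ball estimate is the gap. Applying H\"older with exponents $(p,p')$ and \emph{then} Jensen to pass from $\dashint_B\sigma$ to $(\dashint_B\sigma^r)^{1/r}$ throws the bump away: the inequality
\[
\Big(\,\dashint_B f\sigma\Big)^{p}u(B)\ \le\ K^{p}\int_B f^{p}\sigma
\]
is precisely what the \emph{unbumped} two-weight $A_p$ condition already gives, and that condition is known not to suffice. Consequently the residual problem you pose, $\int N\,f^{p}\sigma\le C\int f^{p}\sigma$ with $N=\sum_{k,j}\1_{B^k_j}$, is genuinely false in general --- no selection procedure can repair it, and the ``geometric decay $u(\Omega_{k+1})\le\theta\,u(\Omega_k)$'' you invoke is not what condition~$\mathsf A$ provides (condition~$\mathsf A$ controls $u$-measure of a cover, not Lebesgue overlap).

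The correct H\"older uses the exponents $(q,rp')$ with $q=(rp')'$, so that
\[
\dashint_B f\sigma\ \le\ \Big(\,\dashint_B (f^{p}\sigma)^{1/t}\Big)^{t/p}\Big(\,\dashint_B\sigma^{r}\Big)^{1/(rp')},\qquad t=p-\tfrac{p-1}{r}>1,
\]
and hence, after multiplying by $u(B)$ and applying the bump,
\[
\Big(\,\dashint_B f\sigma\Big)^{p}u(B)\ \le\ K^{p}\,|B|\,\Big(\,\dashint_B (f^{p}\sigma)^{1/t}\Big)^{t}.
\]
This is strictly sharper than your estimate (Jensen goes the other way), and the exponent $t>1$ is exactly what makes the rest work. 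Condition~$\mathsf A$ is then used, via the selection Lemma~\ref{lemma}, to extract a subfamily $\{\tilde A_i\}$ that is $\alpha$-scattered \emph{in Lebesgue measure} while still carrying the $u$-mass; the disjoint residues $E_i=\tilde A_i\setminus\bigcup_{s<i}\tilde A_s$ satisfy $|\tilde A_i|\le c\,|E_i|$, so
\[
\sum_i |\tilde A_i|\Big(\,\dashint_{\tilde A_i}(f^{p}\sigma)^{1/t}\Big)^{t}
\ \lesssim\ \sum_i\int_{E_i} M^{\B}\big((f^{p}\sigma)^{1/t}\big)^{t}
\ \le\ \int_{\R^n} M^{\B}\big((f^{p}\sigma)^{1/t}\big)^{t}
\ \lesssim\ \int_{\R^n} f^{p}\sigma,
\]
the last step by $M^{\B}:L^{t}\to L^{t}$. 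Compare the identical manoeuvre in the paper's proof of Theorem~\ref{bumpHM}, equation~\eqref{holder} and the paragraph following it.
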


Given a basis $\B$, define the harmonic and geometric maximal operators as
$$M^{\B}_{-1}f(x)=\sup_{\substack{B\in \B \\ x\in B}}\left(\,\dashint_B |f|^{-1}\,\right)^{-1}$$
and
$$M^{\B}_{0}f(x)=\sup_{\substack{B\in \B \\ x\in B}}\exp\left(\,\dashint_B \log|f|\,\right)$$
respectively.  Again we use the convention that $1/0=\infty$ and we define $M^{\B}_{-1}f(x)=M^{\B}_0f(x)=0$ if $x\notin \bigcup_{B\in \B}B$.   Given a weight, $w$, define the harmonic maximal operator with respect to $\B$ and $w$ by
$$M_{-1,w}^\B f(x)=\sup_{\substack{B\in \B \\ x\in B}} \left(\,\frac{1}{w(B)}\int_B |f|^{-1}w\,\right)^{-1}.$$

We are now ready to state our main results.  Again we notice that the inequality
$$\int_{\R^n} (M_{-1}^\B f)^pu\,\leq C\int_{\R^n} |f|^pv\,$$
is equivalent to the inequality
\begin{equation}\label{M-1}\int_{\R^n} M_{-1}^\B(f\sigma^{-1})(x)^pu\,\leq C\int_{\R^n} |f|^p\sigma\,\end{equation}
when $\sigma=v^{\frac{1}{p+1}}$.  Inequality \eqref{M-1} says that the operator $M_{-1}^\B(\sigma^{-1}\,\cdot\,)$ is bounded from $L^p(\sigma)$ to $L^p(u)$.  Our first result is a testing characterization that parallels earlier results in \cite{J}.  

\begin{theorem} \label{testingHM} Suppose  $(u,\sigma)$ is a pair of weights and $p$ is an exponent with $0<p<\infty$.  Suppose further that $M_{-1,\sigma}^\B$ is bounded on $L^p(\sigma)$.  Then 
$$M^\B_{-1}(\sigma^{-1}\,\cdot\,):L^p(\sigma)\ra L^p(u)$$
if and only if there exists a constant $C$ such that
$$\int_FM_{-1}^\B(\sigma^{-1}\1_F)^pu\,\leq C \sigma(F),$$
for all $F$ such that $F$ is a finite union of sets in $\B$.  
\end{theorem}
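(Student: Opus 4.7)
The ``only if'' direction is immediate: applying the assumed strong-type inequality $\int_{\R^n}(M_{-1}^\B(\sigma^{-1}f))^pu\leq C\int f^p\sigma$ to the test function $f=\1_F$ (for any finite union $F$ of sets in $\B$) yields
$$\int_F M_{-1}^\B(\sigma^{-1}\1_F)^pu\leq\int_{\R^n}M_{-1}^\B(\sigma^{-1}\1_F)^pu\leq C\|\1_F\|_{L^p(\sigma)}^p=C\sigma(F),$$
which is exactly the testing condition. The converse is the substantive direction, which I would prove by adapting Jawerth's argument for the Hardy--Littlewood maximal operator to the harmonic setting.

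First, by standard approximation (truncating $f$ and bounding it away from zero on a large compact set), reduce to $f\geq 0$ bounded, compactly supported, and bounded below, so that the open level sets $\Omega_k=\{M_{-1}^\B(\sigma^{-1}f)>a^k\}$ for fixed $a>1$ are bounded of finite measure and vacuous for $|k|$ sufficiently large. Discretize,
$$\int_{\R^n}(M_{-1}^\B(\sigma^{-1}f))^pu\leq a^p\sum_k a^{kp}u(E_k),\qquad E_k=\Omega_k\setminus\Omega_{k+1},$$
and for each $k$ cover $\Omega_k$ by a finite family $\{B_j^k\}_j\subset\B$ satisfying the defining large-harmonic-average condition $\int_{B_j^k}f^{-1}\sigma<a^{-k}|B_j^k|$, so that $F_k=\bigcup_j B_j^k$ is a finite union from $\B$.

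Now apply the testing condition to an individual $B_j^k$, together with the pointwise lower bound $M_{-1}^\B(\sigma^{-1}\1_{B_j^k})(x)\geq |B_j^k|/\sigma(B_j^k)$ for $x\in B_j^k$ (obtained by taking the set $B_j^k$ itself in the supremum, after observing that $(\sigma^{-1}\1_{B_j^k})^{-1}=\sigma$ on $B_j^k$). This yields the local ``$A_{-p}$''-type inequality $u(B_j^k)\leq C\sigma(B_j^k)^{p+1}/|B_j^k|^p$. Combining with $|B_j^k|>a^k\int_{B_j^k}f^{-1}\sigma$ and recognizing the resulting ratio $\sigma(B_j^k)/\int_{B_j^k}f^{-1}\sigma$ as a $\sigma$-weighted harmonic average of $f$ on $B_j^k$, pointwise dominated by $M_{-1,\sigma}^\B f$ on that set, produces
$$a^{kp}u(B_j^k)\leq C\sigma(B_j^k)\cdot\mathop{\mathrm{essinf}}_{B_j^k}(M_{-1,\sigma}^\B f)^p\leq C\int_{B_j^k}(M_{-1,\sigma}^\B f)^p\sigma.$$
To sum over $(k,j)$ without unbounded overcounting (the $B_j^k$ may overlap heavily when $\B$ admits no Vitali-type covering), I would carry out a stopping-time / principal-set construction extracting a subfamily whose principal parts are pairwise disjoint and each occupy a definite $\sigma$-proportion of the corresponding $B_j^k$; the assumed $L^p(\sigma)$-boundedness of $M_{-1,\sigma}^\B$ then closes a Carleson-embedding estimate to obtain $\sum_k a^{kp}u(E_k)\leq C\int(M_{-1,\sigma}^\B f)^p\sigma\leq C\int f^p\sigma$, as desired. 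The main obstacle is this principal-set construction, which must be adapted to the harmonic setting where the stopping condition is \emph{smallness} of $\int f^{-1}\sigma$ rather than the usual largeness of $\int f\sigma$; here the finite-union (rather than singleton) form of the testing hypothesis together with the $L^p(\sigma)$-boundedness of the weighted harmonic maximal $M_{-1,\sigma}^\B$ must substitute for the covering lemmas that fail for a general basis.
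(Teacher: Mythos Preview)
Your necessity argument and the initial discretization are fine and match the paper. The genuine gap is in the summation step. You apply the testing condition to a \emph{single} set $B_j^k$ to obtain $a^{kp}u(B_j^k)\leq C\int_{B_j^k}(M_{-1,\sigma}^\B f)^p\sigma$, and then you are stuck summing over $(k,j)$ because the $B_j^k$ overlap uncontrollably for a general basis. You correctly identify this as ``the main obstacle'' and invoke an unspecified ``stopping-time / principal-set construction,'' but no such construction exists for an arbitrary basis $\B$; that is precisely why the paper's hypothesis is stated for \emph{finite unions} $F$ rather than single sets. Your proposal never actually uses the finite-union form of the testing condition in a substantive way, so the argument does not close.

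The paper's device is quite different. After reaching the sum $\sum_{j,k}F(j,k)\mu(j,k)$ with
\[
F(j,k)=\Big(\tfrac{1}{\sigma(B_j^k)}\int_{B_j^k}f^{-1}\sigma\Big)^{-p},\qquad \mu(j,k)=u(E_j^k)\Big(\tfrac{|B_j^k|}{\sigma(B_j^k)}\Big)^{p},
\]
one views this as $\int_\X F\,d\mu$ on the discrete space $\X=\N\times\Z$ and rewrites it by the layer cake as $\int_0^\infty \mu(\{F>\lambda\})\,d\lambda$. For each $\lambda$ the set $G_N(\lambda)=\bigcup_{F(j,k)>\lambda}B_j^k$ is a finite union from $\B$; since $E_j^k\subset B_j^k\subset G_N(\lambda)$ one has $M_{-1}^\B(\sigma^{-1}\1_{G_N(\lambda)})\geq |B_j^k|/\sigma(B_j^k)$ on $E_j^k$, and because the $E_j^k$ are pairwise disjoint,
\[
\mu(\{F>\lambda\})\leq \int_{G_N(\lambda)}M_{-1}^\B(\sigma^{-1}\1_{G_N(\lambda)})^p u\leq C\sigma(G_N(\lambda))\leq C\sigma\big(\{(M_{-1,\sigma}^\B f)^p>\lambda\}\big).
\]
Integrating in $\lambda$ and using the assumed $L^p(\sigma)$-boundedness of $M_{-1,\sigma}^\B$ finishes the proof. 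The point is that the testing hypothesis is applied once per level $\lambda$, to the entire finite union $G_N(\lambda)$, and the disjointness of the $E_j^k$ handles the bookkeeping; no covering lemma or principal-set extraction is needed.
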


The assumption $M_{-1,\sigma}^\B$ be bounded on $L^p(\sigma)$ is not a strong assumption.  Indeed $M_{-1,\sigma}^\B f$ is bounded by $M_\sigma^\B(|f|^r)^{1/r}$ for any $r>0$.  Thus the hypothesis of Theorem \ref{testingHM} will be satisfied if the maximal operator $M^\B_\sigma$ is bounded on $L^p(\sigma)$ for large $p$. In particular if $\sigma$ belongs to $A_\infty$ and $\B$ is a Muckenhoupt basis then the assumption is satisfied (P\'erez \cite{P2}).

Our next result is a sufficient bump condition for the harmonic maximal operator.   
\begin{theorem} \label{bumpHM} Suppose that $0<p<\infty$, $(u,\sigma)$ is a pair of weights, and $\B$ is a basis such that $M^\B$ is bounded on $L^s(\R^n)$ for {\color{red}} $1<s<\infty$.  If $u$ satisfies condition $\mathsf{A}$ and there exists $r$, $0<r<1$ such that $(u,\sigma)$ satisfies
$$\left(\,\dashint_B u\,\right)\leq C \left(\,\dashint_B \sigma^r\,\right)^{\frac{p+1}{r}}$$
for all $B\in \B$ and some constant $C$, then
$$\int_{\R^n}M_{-1}^\B(f\sigma^{-1})^pu\,\leq C\int_{\R^n}|f|^p\sigma\,$$
for all $f\in L^p(\sigma)$.

\end{theorem}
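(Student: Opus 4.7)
The plan is to reduce the inequality to a standard weighted maximal bound via a H\"older-plus-Jensen pointwise estimate, and then close the argument using the assumed $L^s$-boundedness of $M^\B$ together with condition $\mathsf A$ on $u$.

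For any $B\in\B$ with $x\in B$, applying H\"older's inequality with exponents $p+1$ and $(p+1)/p$ to the factorization $\sigma=(f^p\sigma)^{1/(p+1)}(f^{-1}\sigma)^{p/(p+1)}$ yields
\[
\sigma(B)^{p+1}\leq\Bigl(\int_B f^p\sigma\Bigr)\Bigl(\int_B f^{-1}\sigma\Bigr)^p,
\]
which rearranges to $\bigl(|B|/\!\int_B f^{-1}\sigma\bigr)^p\leq \dashint_B f^p\sigma/(\dashint_B\sigma)^{p+1}$. Taking the supremum over $B\ni x$,
\[
M_{-1}^\B(f\sigma^{-1})(x)^p\leq\sup_{B\ni x}\frac{\dashint_B f^p\sigma}{(\dashint_B\sigma)^{p+1}}.
\]
Since $0<r<1$, Jensen's inequality for the concave map $t\mapsto t^r$ gives $(\dashint_B\sigma)^{p+1}\geq(\dashint_B\sigma^r)^{(p+1)/r}$, which together with the bump condition $\dashint_B u\leq C(\dashint_B\sigma^r)^{(p+1)/r}$ produces $(\dashint_B\sigma)^{p+1}\geq C^{-1}\dashint_B u$. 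Substituting,
\[
M_{-1}^\B(f\sigma^{-1})(x)^p\leq C\sup_{B\ni x}\frac{1}{u(B)}\int_B f^p\sigma
=C\,M^\B_u\!\Bigl(\frac{f^p\sigma}{u}\Bigr)(x),
\]
where $M^\B_u$ is the weighted maximal operator with weight $u$.

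Integrating this pointwise bound against $u$ reduces the theorem to $\int M^\B_u(f^p\sigma/u)\,u\leq C\int f^p\sigma$. Under the hypotheses---$M^\B$ bounded on $L^s$ for all $s>1$, so that $\B$ is a Muckenhoupt basis, and condition $\mathsf A$ on $u$, equivalent to $u\in A_\infty^\B$---P\'erez's theorem \cite{P2} yields the $L^q(u)$-boundedness of $M^\B_u$ for every $1<q<\infty$. To upgrade this to the $L^1(u)$-strong bound on the particular function $\phi=f^p\sigma/u$, one exploits the slack from the strict bumping $r<1$: a refinement of the H\"older step with perturbed exponents, together with the corresponding strict Jensen inequality, produces a sharper pointwise estimate of the form $M_{-1}^\B(f\sigma^{-1})(x)^{p}\leq C[M^\B_u(\phi^{1+\eta})(x)]^{1/(1+\eta)}$ for some $\eta=\eta(r)>0$; combining this with the $L^{1+\eta}(u)$-boundedness of $M^\B_u$ and H\"older's inequality closes the estimate.

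The main obstacle is the final passage from the $L^q(u)$-boundedness of $M^\B_u$ to the $L^1(u)$-strong estimate needed on $\phi$. In the absence of a Besicovitch-type covering lemma for a general basis $\B$, one cannot proceed directly from the pointwise domination to strong-type, since $M^\B_u$ fails $L^1(u)\to L^1(u)$ in general. The combination of condition $\mathsf A$ (playing the role of the missing covering) and the strictly bumped exponent $r<1$ (providing the extra integrability to upgrade a weak estimate to a strong one) is the technical heart of the proof.
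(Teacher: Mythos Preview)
There is a genuine gap. Your pointwise reduction to $M_u^\B(f^p\sigma/u)$ is correct, but it leaves you needing the strong-type bound $M_u^\B:L^1(u)\to L^1(u)$, which fails even for the basis of cubes. The proposed rescue---a sharper pointwise estimate $M_{-1}^\B(f\sigma^{-1})^p\leq C[M_u^\B(\phi^{1+\eta})]^{1/(1+\eta)}$---is asserted but not derived, and it does not follow from any H\"older/Jensen refinement. If you actually carry out the H\"older step with the exponents tuned to $r$, what comes out is
\[
\Bigl(\dashint_B f^{-1}\sigma\Bigr)^{-p}u(B)\le C\,|B|\,\Bigl(\dashint_B(f^p\sigma)^{1/t}\Bigr)^{t},\qquad t=\frac{p+1-rp}{r}>1,
\]
an \emph{unweighted} Lebesgue average on the right, not a $u$-average; there is no way to massage this into $\frac{1}{u(B)}\int_B(\cdot)^{1+\eta}u$. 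Separately, the hypotheses of the theorem---unweighted $L^s$-boundedness of $M^\B$ together with condition~$\mathsf A$ on $u$---do not let you invoke P\'erez's $L^q(u)$-boundedness of $M_u^\B$: that result needs a Muckenhoupt basis and $u\in A_\infty^\B$, neither of which is assumed here.

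The paper's argument takes a different route and never attempts a pointwise domination. It runs a level-set decomposition $\{M_{-1}^\B(f\sigma^{-1})>2^k\}$, covers each level set by finitely many $B_j^k\in\B$, and then uses condition~$\mathsf A$ as a \emph{covering device}: a selection lemma (in the style of Jawerth and \cite{GLPT}) extracts subfamilies $\{\tilde A_i(l)\}$ whose $u$-measures control $u(\Omega_k)$ and for which the Lebesgue-disjoint cores $E_i(l)=\tilde A_i(l)\setminus\bigcup_{s<i}\tilde A_s(l)$ satisfy $|\tilde A_i(l)|\le c\,|E_i(l)|$. The H\"older estimate above is then applied set by set and summed over the pairwise-disjoint $E_i(l)$ to give
\[
\sum_{i,l}|\tilde A_i(l)|\Bigl(\dashint_{\tilde A_i(l)}(f^p\sigma)^{1/t}\Bigr)^{t}\lesssim\int_{\R^n}M^\B\bigl((f^p\sigma)^{1/t}\bigr)^t\lesssim\int_{\R^n}f^p\sigma,
\]
using only the assumed unweighted $L^t$-bound for $M^\B$. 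So condition~$\mathsf A$ replaces the missing Besicovitch lemma at the level of sets; it is not used to produce weighted bounds for $M_u^\B$.
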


The bump condition for $M^\B_{-1}$ requires a power $r<1$ instead of $r>1$.  This is due to the nature of the weighted constant on $(u,\sigma)$ with $\sigma$ being on the right side of the inequality.

It is unclear how to extend these results to the geometric maximal operator.  One obstacle of extending Theorem \ref{testingHM} is the fact that it uses the boundedness of the weighted harmonic maximal operator $M_{-1,\sigma}^\B$.  We remark that we do not know how to extend this result to the geometric maximal operator because it is unclear how to take a limit of the bump condition.

We do show $M_0^\B$ can be approximated from below with the operators
$$M_{-r}^\B f= M_{-1}^\B(|f|^r)^{\frac1r}, \qquad r>0.$$
In fact, if we define 
$$M_{0^-}^\B f=\lim_{r\ra 0^+}M_{-r}^\B f$$
then we have the following lemma.

\begin{lemma} \label{approx} Suppose $f$ is a non-negative measurable function on a fixed cube $Q_0$, possibly of infinite measure, such that $f^{-1}$ belongs to $L^r_{\text{loc}}(Q_0)$ for some $r>0$.  Then for all $x\in \R^n$
$$M_{0^-}^\B(f\1_{Q_0})(x)=M_0^\B(f\1_{Q_0})(x).$$
\end{lemma}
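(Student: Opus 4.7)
The plan is to reduce the identity to two elementary statements about each basis set: (i) for each $B \in \B$ with $x \in B$, the quantity $A_r(B) := \bigl(\dashint_B (f\1_{Q_0})^{-r}\bigr)^{-1/r}$ is non-increasing in $r > 0$; and (ii) $\lim_{r \to 0^+} A_r(B) = A_0(B) := \exp\bigl(\dashint_B \log(f\1_{Q_0})\bigr)$, possibly as extended reals. Granting (i) and (ii), the conclusion follows by interchanging two monotone suprema (always valid), yielding
$$M_{0^-}^\B(f\1_{Q_0})(x) = \sup_{r > 0}\sup_{B \ni x} A_r(B) = \sup_{B \ni x}\sup_{r > 0} A_r(B) = \sup_{B \ni x} A_0(B) = M_0^\B(f\1_{Q_0})(x).$$

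Fact (i) follows from monotonicity of $L^s$ norms in $s > 0$ on the probability space $(B, dx/|B|)$, applied to $g := (f\1_{Q_0})^{-1}$: the $L^r$ norm of $g$ is non-decreasing in $r$, hence its reciprocal $A_r(B)$ is non-increasing in $r$. A byproduct is that $r \mapsto M_{-r}^\B(f\1_{Q_0})(x)$ is also non-increasing, so the limit defining $M_{0^-}^\B$ exists and equals the supremum over $r$. Note that both $A_r(B)$ and $A_0(B)$ vanish whenever $|B \setminus Q_0| > 0$ (by the conventions $1/0 = \infty$ and $\log 0 = -\infty$), so the suprema effectively range over $B$ with $|B \setminus Q_0| = 0$, on which $g = f^{-1}$.

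Fact (ii) is a version of the classical $L^0$ limit on a finite measure space. The expected expansion is $\dashint_B g^r = 1 + r \dashint_B \log g + o(r)$ as $r \to 0^+$, which after rearrangement yields $A_r(B) \to \exp(-\dashint_B \log g) = A_0(B)$. The main obstacle is justifying this expansion when $\log g$ is not assumed integrable. The key input is that $(g^r - 1)/r$ is a non-decreasing function of $r > 0$ (by convexity of $r \mapsto g^r$) with pointwise limit $\log g$ as $r \to 0^+$, and is dominated above by $(g^{r_0} - 1)/r_0 \in L^1(B)$ thanks to the hypothesis $f^{-1} \in L^{r_0}_{\text{loc}}(Q_0)$. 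Monotone convergence applied to the non-negative increasing family $(g^{r_0}-1)/r_0 - (g^r-1)/r$ then gives $\dashint_B (g^r - 1)/r \to \dashint_B \log g$ as extended real numbers, and a standard logarithm/exponential manipulation converts this into the convergence $A_r(B) \to A_0(B)$, including the case where both sides are $+\infty$.
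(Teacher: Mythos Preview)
Your proof is correct and in essence equivalent to the paper's: both reduce to the classical fact that for a probability space, $\bigl(\dashint_B g^r\bigr)^{1/r}\to\exp\bigl(\dashint_B\log g\bigr)$ as $r\to0^+$, applied with $g=(f\1_{Q_0})^{-1}$. The paper establishes the inequality $M_{0^-}^\B\le M_0^\B$ by Jensen and the reverse inequality by an $\epsilon$-argument (pick $B$ nearly realizing $M_0^\B$, then pass to the limit in $r$ inside that $B$); you instead exploit that $r\mapsto A_r(B)$ is monotone, identify the limit with a supremum, and swap the two suprema. Your framing is slightly cleaner because it handles both directions at once, and you give more detail than the paper on why the $L^0$ limit holds (the convexity/monotone-convergence argument for $(g^r-1)/r$), which the paper simply asserts. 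The only point worth noting is that the paper restricts to $B\subset Q_0$ rather than $|B\setminus Q_0|=0$; for a closed cube $Q_0$ and open $B$ these coincide, and in either case the local integrability hypothesis on $f^{-1}$ is what makes $(g^{r_0}-1)/r_0\in L^1(B)$, which you correctly identify as the key input.
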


The plan of the paper is as follows.  In Section \ref{sect3} we will prove the two weight testing characterizations,  Theorems \ref{testingHM} and Lemma \ref{approx}.  In Section \ref{sect4} we will prove Theorem \ref{bumpHM}.  We will end with Section \ref{sect5} and some observations for the basis of dyadic cubes.

\section{Two weight testing conditions}\label{sect3}
Our proof of Theorem \ref{testingHM} will follow the original proof of Jawerth for the maximal operator associated to $\B$, which uses a discretization of the operator $M_{-1}^\B$.  

\begin{proof}[Proof of Theorem \ref{testingHM}]  For the moment we will assume that $f$ is a non-negative function, supported on a fixed cube $Q_0$, and is bounded above on that cube.  We will also momentarily assume that $\sigma$ is bounded below.   These assumption ensure that averages of the form
$$ \dashint_B f^{-1}\sigma $$
are always non zero if $B\in \B$ and satisfies $B\subset Q_0$.  We will remove these restrictions at the end of the proof.  First notice that if $\lambda>0$ and $M_{-1}^\B(f\sigma^{-1})(x)>\lambda$ then there exists $B\in \B$ such $B\subset Q_0$ and
\begin{equation}\label{level}\left(\,\dashint_{B} f^{-1}\sigma\right)^{-1}>\lambda.\end{equation}
Indeed, by the definition of the $M_{-1}^\B(f\sigma^{-1})(x)$ there exists $B\in \B$ that satisfies \eqref{level}.  Moreover, $B\subset Q_0$ because if not then
$$\left(\dashint_B f^{-1}\sigma\right)^{-1}=|B|\left(\int_{B\cap Q_0} f^{-1}\sigma+\int_{B\backslash Q_0}f^{-1}\sigma\right)^{-1}=0.$$

Let 
$$\Omega_k=\{x\in Q_0: 2^k<M^\B_{-1}(f\sigma^{-1})(x)\leq 2^{k+1}\}.$$
From the definition of $M^\B_{-1}$ we have that if $\Omega^k\not=\varnothing$ then $\Omega^k\subset \bigcup_{j} B^k_j$ where $B^k_j\in \B$, $B^k_j\subset Q_0$, and satisfying
$$\left(\,\dashint_{B^k_j} f^{-1}\sigma\right)^{-1}>2^k.$$
Set $E^k_1=B_1^k\cap \Omega_k$ and for $j>1$ set 
$$E^k_j=\Big(B^k_j\backslash \bigcup_{i=1}^{j-1} B^k_i\Big)\cap \Omega_k.$$
Then the sets $\{E^k_j\}_{j,k}$ are pairwise disjoint and $\Omega_k=\bigcup_j E^k_j.$
We are now ready to estimate $\|M_{-1}^\B(f\sigma^{-1})\|_{L^p(u)}$.  We have
\begin{align*}
\int_{\R^n}M_{-1}^\B(f\sigma^{-1})^p u &= \sum_k \int_{\Omega_k} M^\B_{-1}(f\sigma^{-1})^pu \\
&\leq 2^p\sum_{j,k} 2^{kp}u(E^k_j)\\
&\leq 2^p\sum_{j,k} \left(\,\dashint_{B^k_j} f^{-1}\sigma\right)^{-p}u(E^k_j)\\
&=2^p\sum_{j,k} \left(\frac{1}{\sigma(B^k_j)}\int_{B^k_j} f^{-1}\sigma\right)^{-p}u(E^k_j)\left(\frac{|B^k_j|}{\sigma(B^k_j)}\right)^p
\end{align*}
On the measure space $\mathcal X=\N\times \Z$ define the function 
$$F(j,k)=\left(\frac{1}{\sigma(B^k_j)}\int_{B^k_j} f^{-1}\sigma\right)^{-p}$$
and the measure
$$\mu(j,k)=u(E^k_j)\left(\frac{|B^k_j|}{\sigma(B^k_j)}\right)^p.$$
Then we have
$$\int_{\R^n}M_{-1}^\B(f\sigma^{-1})^p u\leq 2^p\int_\X F\,d\mu=2^p\int_0^\infty \mu(\{(j,k)\in \X: F(j,k)>\lambda\})d\lambda.$$
Given $\lambda>0$ and $N\in \N$ set 
$$\Gamma_N(\lambda)=\{(j,k)\in \X: j+|k|\leq N, F(j,k)>\lambda\} \ \ \text{and} \ \ G_N(\lambda)=\bigcup_{(j,k)\in \Gamma_N(\lambda)}B^k_j,$$
so that $\Gamma_N(\lambda)$ is a finite union of sets in $\B$.  Then by the testing condition
\begin{align*}
\mu(\Gamma_N(\lambda))&=\sum_{(j,k)\in \Gamma_N(\lambda)} u(E^k_j)\left(\frac{|B^k_j|}{\sigma(B^k_j)}\right)^p\\
&\leq \sum_{(j,k)\in \Gamma(\lambda)}\int_{E^k_j} M^\B_{-1}(\1_{G_N(\lambda)}\sigma^{-1})^pu\\
&\leq \int_{G(\lambda)}  M^\B_{-1}(\1_{G_N(\lambda)}\sigma^{-1})^pu\\
&\leq C\sigma(G_N(\lambda)).
\end{align*}
Moreover, if $x\in G_N(\lambda)$ then $x\in B^k_j$ for some $j$ and $k$ with 
$$\lambda<\left(\dashint_{B_j^k} f^{-1}\sigma\right)^{-p}\leq M_{-1,\sigma}^\B f(x)^p,$$
which is to say that 
$$G_N(\lambda)\subset \{x:(M_{-1,\sigma}^\B f)^p>\lambda\}.$$
Letting $N\ra \infty$ we have 
$$\mu(\{(j,k)\in \X:F(j,k)>\lambda\})\leq C\sigma(\{x:(M_{-1,\sigma}^\B f)^p>\lambda\}).$$
Combining this calculation with the previous estimates we have

\begin{multline*}\int_{\R^n}M_{-1}^\B(f\sigma^{-1})^pu\leq C\int_0^\infty\sigma(\{x:M_{-1}^\B(f\sigma^{-1})^p>\lambda\})d\lambda\\
=C\int_{\R^n}(M_{-1,\sigma}^\B f)^p\sigma\leq C\int_{\R^n} f^p\sigma\end{multline*}
where we used the assumption $M_{-1,\sigma}^\B:L^p(\sigma)\ra L^p(\sigma)$.  To remove the assumptions on $f$ and $\sigma$ assume that $f\in L^p(\sigma)$ and $f\geq 0$.  Notice that the inequality 
$$\int_{\R^n}M_{-1}^\B(f\sigma^{-1})^pu\leq C\int_{\R^n} f^p\sigma, \qquad f\geq 0$$
is equivalent to 
$$\int_{\R^n}(M_{-1}^\B f)^pu\leq C\int_{\R^n} f^pv, \qquad f\geq 0$$
where $v=\sigma^{p+1}$.  Since $v>0$ and $f$ is bounded and supported on a cube we have
$$\int_{\R^n} (M_{-1}^\B f)^pu\leq C\int_{\R^n} f^p v.$$
Given $N\in \N$ let $Q_N=[-N,N]^n$ and let
$$f_N=\Big(\frac{1}{f}+\frac{1}{N}\Big)^{-1}{\1_{Q_N}}$$
and 
$$v_N=\sigma^{p+1}+N^{-p-n-1}.$$
Then, if $\sigma_N=v_N^{1/(p+1)}$ we have $\sigma_N\geq \sigma$ and $\sigma_N$ is bounded below.   Given any finite union of sets in our basis, $F$, we have 
$$\int_FM_{-1}^\B(\1_F\sigma_N^{-1})^p u\leq \int_FM_{-1}^\B(\1_F\sigma^{-1})^p u\leq C\int_F\sigma\leq C\int_F \sigma_N.$$
In particular, $\sigma_N$ satisfies the testing condition with the same constant as $\sigma$.  We now make some observations about $f_N$.  First, clearly $f_N$ is supported on the cube cube $Q_N$.  Second, $f_N\leq \min(f,N)$ so $f_N$ is bounded above.   Finally, the sequence $f_N$ is increasing since it is zero off $Q_N$, and on $Q_N$ we have
$$\frac{1}{f_{N+1}}=\frac{1}{f}+\frac{1}{N+1}\leq \frac{1}{f}+\frac{1}{N}=\frac{1}{f_N}.$$
Then $f_N$ and $\sigma_N$ satisfies the restricted hypothesis at the beginning of the proof, so we have for $v=\sigma^{p+1}$
\begin{multline*}\int_{\R^n}M_{-1}^\B(f_N)^pu\leq C\int_{Q_N}f_N^pv_N\leq C\int_{\R^n} f^pv\\
+C\int_{Q_N} N^pN^{-p-n-1}
\leq C\int_{\R^n}f^pv+\frac{C}{N}.\end{multline*}
Since $f_N$ is an increasing sequence we also have that $M_{-1}^\B(f_N)$ is an increasing sequence and since $f_N\leq f$ we have
$$\lim_{N}M_{-1}^\B(f_N)\leq M_{-1}^\B(f).$$
On the other hand let $\ep>0$ and $x\in \R^n$.  Then there exists $B\in \B$ such that $x\in B$ and
$$M_{-1}^\B f(x)-\ep<\left(\dashint_B f^{-1}\right)^{-1}.$$
If $\dashint_B f^{-1} =\infty$ then
$$M_{-1}^\B(f)(x)-\ep\leq 0 \leq  M_{-1}^\B(f_N)(x).$$
Otherwise, $f>0$ on $B$ and since $B$ is bounded we have that $B\subset Q_N$ for $N$ large and 
$$\dashint_B \frac1f =\dashint_B \frac1{f_N}-\frac1N\geq \Big(\,\inf_{B\ni x} \dashint_B \frac1{f_N}\,\Big)-\frac1N=[M_{-1}^\B(f_N)(x)]^{-1}-\frac{1}{N}.$$
Letting $N\ra \infty$ we have
$$\dashint_B \frac{1}{f}\geq \lim_N[M_{-1}^\B(f_N)(x)]^{-1}.$$
Then
$$M_{-1}^\B f(x)-\ep\leq \lim_NM_{-1}^\B(f_N)(x),$$
and since $\ep>0$ we have that the sequence $M_{-1}^\B(f_N)$ increases to $M_{-1}^\B f$.  By the monotone convergence theorem
$$\int_{\R_n}(M_{-1}^\B f)^pu\leq C\int_{\R^n}f^pv$$
with $v=\sigma^{p+1}$, which is equivalent to the desired inequality.
 \end{proof}

\begin{proof}[Proof of Lemma \ref{approx}] By Jensen's inequality we have 
$$\lim_{r\ra 0^+}M_{-r}^\B(f\1_{Q_0})(x)\leq M_0^\B(f\1_{Q_0})(x).$$
On the other hand if $x\notin Q_0$ then 
 $$\lim_{r\ra 0^+}M_{-r}^\B(f\1_{Q_0})(x)=M_0^\B(f\1_{Q_0})(x)=0.$$ 
 Let $x\in Q_0$ and $\ep>0$, then we may assume that there exists $B\in \B$ with $x\in B$ such that $B\subset Q_0$ and
 $$M_0(f\1_{Q_0})(x)-\ep<\exp\left(\dashint_B \log |f|\right).$$
 If no such $B$ exists then again both $M_0^\B f(x)$ and $\lim_{r\ra 0^+}M_{-r}^\B f(x)$ are zero.  Now we have
 \begin{multline*}
 M_0(f\1_{Q_0})(x)-\ep<\exp\left(\dashint_B \log |f|\right)=\left[\exp\left(\dashint_B \log|f|^{-1}\right)\right]^{-1}\\
 =\left[\lim_{r\ra 0^+}\left(\dashint_B |f|^{-r}\right)^{\frac1r}\right]^{-1}=\lim_{r\ra 0^+}\left(\dashint_B |f|^{-r}\right)^{-\frac1r}\leq \lim_{r\ra 0^+}M_{-r}^\B(\1_{Q_0} f)(x).
 \end{multline*}

\end{proof}

\section{Two weight bump conditions}\label{sect4}

We would like to use the same techniques in Theorem \ref{testingHM} to prove Theorem \ref{bumpHM}.  However, one of the main difficulties is that we have no control over the size of the disjoint sets $E^k_j$.  It is here that we use condition $\mathsf{A}$ on the weight $u$ (see inequality \eqref{condA}).  We begin with a lemma whose proof can be found in \cite{GLPT}. 
 
 \begin{lemma}\label{lemma}
Let $\B$ be a basis and $w$ a weight associated to this basis. Suppose further that $w$ satisfies condition \textsf{A} with constants $0<\al<1$ and $c=c(\al)$. Then given any finite sequence $\{ A_i \}_{i=1}^M$ of sets $ \B$, we can find a subsequence $\{\tilde{ A_i} \}_{i \in I}$ of  $\{ A_i \}_{i=1}^M$ such that the following hold: for each $1\leq i<j\leq M$ we have 
\begin{enumerate}[(i)]
\item for each $i\in I$
$$ \Big|\tilde{A}_i\cap \bigcup_{\substack{s\in I\\ s<i}}\tilde{A}_s\Big|\leq \al|\tilde{A}_i|,$$
\item for each $1\leq i<j\leq M+1$
$$u\Big(\bigcup_{1\leq s<j}A_s\Big)\leq c\left[u\Big(\bigcup_{1\leq s<i}A_i\Big)+u\Big(\bigcup_{\substack{s\in I\\ i\leq s<j}} \tilde{A_s}\Big)\right]$$
 \end{enumerate}

\end{lemma}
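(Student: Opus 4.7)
The plan is to construct $I$ greedily by scanning $i=1,2,\ldots,M$ in order and admitting an index into $I$ precisely when $A_i$ is not yet heavily covered by the already selected sets. Concretely, put $1\in I$ and, for $i\geq 2$, include $i$ in $I$ if and only if
$$\Big|A_i\cap \bigcup_{\substack{s\in I\\ s<i}} \tilde A_s\Big|\leq \al|A_i|,$$
and in that case set $\tilde A_i = A_i$. Property~(i) then holds by the very rule used to admit indices, so no further work is needed there.

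The heart of the argument is the complementary observation that a non-admitted $A_k$ must be almost covered by what has been selected before it: writing $V_k=\bigcup_{s\in I,\, s<k}\tilde A_s$, one has $|A_k\cap V_k|>\al|A_k|$ whenever $k\notin I$. Because $A_k$ itself lies in $\B$ and contains every $x\in A_k$, this forces
$$A_k\subset \{x\in\R^n:M^\B(\1_{V_k})(x)>\al\},$$
which is the only place condition \textsf{A} will enter.

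To establish (ii), fix $1\leq i<j\leq M+1$ and set $U_i=\bigcup_{1\leq s<i}A_s$ and $W=\bigcup_{s\in I,\, i\leq s<j}\tilde A_s$. For any $k$ with $i\leq k<j$, each $\tilde A_s$ contributing to $V_k$ satisfies either $s<i$ (so $\tilde A_s\subset U_i$) or $i\leq s<k<j$ (so $\tilde A_s\subset W$); hence $V_k\subset U_i\cup W$ uniformly in $k\in[i,j)$. Combining with the previous display yields
$$\bigcup_{\substack{i\leq k<j\\ k\notin I}}A_k \;\subset\; \{M^\B(\1_{U_i\cup W})>\al\},$$
and applying condition \textsf{A} to $U_i\cup W$ bounds the $u$-mass of this set by $c(\al)(u(U_i)+u(W))$. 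The admitted indices $k\in I\cap[i,j)$ contribute sets already in $W$, while $U_i$ absorbs the indices $s<i$, so writing $\bigcup_{1\leq s<j}A_s = U_i\cup W\cup\bigcup_{k\in[i,j)\setminus I}A_k$ and using subadditivity of $u$ gives~(ii), with the constant $c$ absorbed from $c(\al)+1$.

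The main obstacle is just the bookkeeping: one must verify that the inclusion $V_k\subset U_i\cup W$ is valid simultaneously for \emph{every} $k\in[i,j)$, since this is precisely what lets a single invocation of condition \textsf{A} handle all non-admitted indices in the window at once. Once that is in place the remainder is elementary set containment.
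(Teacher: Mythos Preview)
Your argument is correct and is precisely the greedy selection procedure that the paper defers to \cite{GLPT}: admit $1\in I$, then admit $i$ iff $A_i$ is not yet $\alpha$-covered by the previously admitted sets, and for (ii) use that every discarded $A_k$ with $i\le k<j$ lands in $\{M^\B(\1_{U_i\cup W})>\alpha\}$ because $V_k\subset U_i\cup W$.

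One small refinement: your final subadditivity step yields $c(\alpha)+1$ rather than the $c$ in the stated inequality. You can recover the exact constant by observing that the sets you handle separately, namely those in $U_i$ and those in $W$, are themselves contained in $\{M^\B(\1_{U_i\cup W})>\alpha\}$ (each such $A_s\in\B$ is entirely inside $U_i\cup W$, so the average of $\1_{U_i\cup W}$ over $A_s$ equals $1>\alpha$). Thus the whole union $\bigcup_{1\le s<j}A_s$ sits inside that single level set, and one application of condition \textsf{A} gives $u\big(\bigcup_{1\le s<j}A_s\big)\le c\,u(U_i\cup W)\le c\big(u(U_i)+u(W)\big)$ directly.
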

 



\vspace{3mm}










We are now ready to prove Theorem \ref{bumpHM}.

\begin{proof}[Proof of Theorem \ref{bumpHM}]
We will assume again that $f$ is supported on a cube and that $f$ is a bounded function on that cube and that $\sigma$ is bounded below.  The limiting argument presented in the proof of Theorem \ref{testingHM} will allow us to pass to general $f\in L^p(\sigma)$.  Since $f$ is a bounded function with compact support we have that $M_{-1}^\B(f\sigma^{-1})$ is bounded and hence finite a.e.  Fix $N\in \N$, we shall estimate

$$ \int_{\{x:2^{-N} < M^{\B}_{-1}f(x) \leq 2^{N+1}\}} M^{\B}_{-1} (f \s^{-1})^p u.$$
Our estimates will not depend on $N$ so a limiting argument will allow us to obtain all of $\R^n$.

\vspace{3mm}

\noindent For each $k\in \Z$ with $|k| \le N$, we can find a compact 
$$K_k \subseteq  \{ x \in \R^n : M^{\B}_{-1}(f\s^{-1})(x)  >2^k \}$$
and  
$$u( \{ x \in \R^n : M^{\B}_{-1}(f\s^{-1})(x)  >2^k \}) \leq 2u(K_k).$$

We will now use a selection process from \cite{GLPT} (see also \cite{J} and \cite{LL}).  In \cite{GLPT} the selection process was carried out for the basis $\mathsf{R}$ but the same procedure works for a general basis.  We repeat the details here for the convenience of the the reader.  For each $|k|\leq N$ there exists a finite collection of sets in $\B$, $\{B_j^k\}_j$ that cover $K_k$ and satisfy 
$$\left(\avgBjko f^{-1}\s \right)^{-1}> 2^k .$$
For convenience, we set $b_k =\{B_j^k\}_j$ if $|k|\leq N$ and  $b_k=\varnothing$ if $|k| > N$.  Also set and
$$\W_k = \begin{cases} \bigcup_{s \geq k} \bigcup_j B_j^s &\mbox{when } |k| \leq N \\ 
\varnothing & \mbox{when }|k| > N. \end{cases} $$
Observe that these sets are decreasing in $k$, i.e., $\W_{k+1} \subset \W_k$.  We will now rearrange the sets in the $b_k$'s into a double indexed sequence $\{A_i(l)\}_{i\geq 1,1\leq l\leq \mu}$ where $\mu$ is a large number to be chosen later.  Set $i_0(0)=1$.  Let $i_1(0)-1$ be the number of sets in $b_N=\{B_j^N\}_{j}$ and define
$$A_i(0)=B_i^N, \qquad i_0(0)=1\leq i<i_1(0).$$
Next, let $i_2(0)-i_1(0)$ be the number of sets in $b_{N-\mu}=\{B^{N-\mu}_j\}_j$ and set
$$A_i(0)=B_i^{N-\mu}, \qquad i_1(0)=1\leq i<i_2(0).$$
Continue this process we reach the first integer $m_0$ such that $N-(m_0+1)\mu< -N$.  At this point we let 
$$A_i(0)=B_i^{N-m_0\mu}, \qquad i_{m_0}\leq i<i_{m_0+1}(0).$$
Define the sequence $\{A_i(1)\}_{i}$ to be first the sets of $b_{N-1}=\{B_j^{N-1}\}_{j}$ followed by the sets of $b_{N-1-\mu}$ and continue until the first integer $m_1$ such that $N-1-(m_1+1)\mu<-N.$  Finally, continue this process until the sets of all of the $b_k$'s are exhausted.

Since $u$ satisfies condition \textsf{A} we can apply Lemma \ref{lemma} to each $\{A_i (l) \}_{i\geq 1}$ for a fixed $\al$ to obtain sequences
	$$\{\tilde{A_i} (l) \}_{i\geq 1} \subset \{A_i (l) \}_{i\geq 1}, \hspace{5mm} 0\leq l \leq \m-1,$$
From the definition of the set $\W_k$ and the construction of the families $\{A_i (l) \}_{i\geq 1}$, we can use Lemma \ref{lemma} to obtain
	\begin{multline*}
	u(\W_k) \leq c\left[ u(\W_{k+\m}) + u\left( \bigcup_{i_{m_l} (l) \leq i < i_{m_l +1} (l)} \tilde{A_i} (l) \right) \right] \\
	\leq c\,u(\W_{k+\m}) + c\sum_{i=i_{m_l}(l)}^{i_{m_l +1}(l) -1} u(\tilde{A_i}(l))
	\end{multline*}
if $k = N- l - m\m$. It suffices to consider these indices $k$ because the sets $\W_k$ are decreasing.

The sets $\{ \tilde{A_i} (l) \}_{i=i_{m_l}(l)}^{i_{m_l +1}(l) -1}$ belong to $b_k$ with $k = N - l - m\m$ and therefore
	$$\left(  \dashint_{\tilde{A_i}(l)} f^{-1}\s \right)^{-1} > 2^k . $$ 
By Lemma \ref{lemma} we  have
\begin{multline*}
\int_{\{2^{-N}<M^{\B}_{-1}f \leq 2^{N+1}\}} M^{\B}_{-1} (f \s^{-1})^p u \lesssim  \sum_k 2^{kp}u(\Omega_{k})\\
\lesssim \sum_k 2^{kp}u(\Omega_{k+\mu})+\sum_{l=0}^{\mu-1}\sum_{i=i_m(l)}^{i_{m+1}(l)-1}u(\tilde{A_i}(l))\left(  \dashint_{\tilde{A_i}(l)} f^{-1}\s \right)^{-p}. 
\end{multline*}
Since the sum $\sum_k 2^{kp}u(\Omega_k)$ is finite and $\sum_k 2^{kp}u(\Omega_{k+\mu})\leq 2^{-p\mu}\sum_{k}2^{kp}u(\Omega_k)$ we may choose $\mu$ large enough to ignore the first summation.  For the other term we have
\begin{multline*}
\sum_{l=0}^{\mu-1}\sum_{i=i_m(l)}^{i_{m+1}(l)-1}u(\tilde{A_i}(l))\left( \frac{1}{|\tilde{A_i}(l)|} \int_{\tilde{A_i}(l)} f^{-1}\s \right)^{-p}\\
\lesssim  \sum_{l,i} \left( \dashint_{\tilde{A_i}(l)} f^{-1}\s \right)^{-p}|\tilde{A_i}(l)| \left( \dashint_{\tilde{A_i}(l)}\s^r \right)^{\frac{p+1}{r}}.
\end{multline*}
Consider $\left(\dashint_B \s^r\right)^{\frac{p+1}{r}}$ for a general $B\in \B$. Using Holder's inequality with 
$$s=\frac{p+1}{rp} \quad \text{and} \quad s'=\frac{p+1}{p+1-rp}$$ 
we find that 
\begin{align}\left(\avgB \s^r\right)^{\frac{p+1}{r}}& = \left(\avgB \s^r (f\s^{-1})^{\frac{1}{s}} (f\s^{-1})^{-\frac{1}{s}}  \right)^{\frac{p+1}{r}}\nonumber\\
& \leq \left(\avgB  \left(\s^r (f\s^{-1})^{\frac{1}{s}}\right)^{s'}\right)^{\frac{p+1}{rs'}} \left( \avgB  (f\s^{-1})^{-\frac{s}{s}}\right)^{\frac{p+1}{rs}}\nonumber\\
&= \left( \avgB (f^{\frac{s'}{s}}\sigma^{rs'-\frac{s'}{s}}) \right)^{\frac{p+1}{rs'}} \left( \avgB f^{-1} \s \right)^p\nonumber\\
\label{holder}&= \left( \dashint_B (f^{p}\sigma)^{\frac{rs'}{p+1}} \right)^{\frac{p+1}{rs'}} \left( \avgB f^{-1} \s \right)^p
\end{align}
where we have used the calculations
$$\frac{s'}{s}=s'\frac{rp}{p+1}=p\frac{rs'}{p+1}, \quad \text{and} \quad rs'-\frac{s'}{s}=s'\Big(r-\frac1s\Big)=\frac{rs'}{p+1}.$$
Letting 
$$t=\frac{p+1}{rs'}=\frac{p+1-rp}{r}>1$$
and using inequality \eqref{holder} we obtain
\begin{align*}
\lefteqn{\int_{\{2^{-N}<M^{\B}_{-1}f \leq 2^{N+1}\}} M^{\B}_{-1} (f \s^{-1})^p u}\\ 
& \lesssim  \sum_{l=0}^{\mu-1}\sum_{i=i_m(l)}^{i_{m+1}(l)-1} \left( \dashint_{\tilde{A_i}(l)} f^{-1}\s \right)^{-p}|\tilde{A_i}(l)| \left( \dashint_{\tilde{A_i}(l)} \s^r \right)^{\frac{p+1}{r}}\\
&\lesssim\sum_{l=0}^{\mu-1}\sum_{i=i_m(l)}^{i_{m+1}(l)-1}\left( \dashint_{\tilde{A_i}(l)}(f^p \s)^{\frac{1}{t}} \right)^{t} |\tilde{A_i}(l)|.
\end{align*}
For each $l$ let $E_1(l)=\tilde{A_i}(l)$ and $E_i(l)=\tilde{A_i}(l)\backslash \bigcup_{s<i}\tilde{A_s}(l)$ for $i>1$.  Then the sets $\{E_i(l)\}$ are pairwise disjoint and using property (i) of Lemma \ref{lemma} we have that $|A_i(l)|\leq c|E_i(l)|$.  Continuing with the estimates we have
\begin{align*}
\sum_{l=0}^{\mu-1}\sum_{i=i_m(l)}^{i_{m+1}(l)-1}\left( \dashint_{\tilde{A_i}(l)}(f^p \s)^{\frac{1}{t}} \right)^{t} |\tilde{A_i}(l)|&\lesssim \sum_{l=0}^{\mu-1}\sum_{i=i_m(l)}^{i_{m+1}(l)-1}\left( \dashint_{\tilde{A_i}(l)}(f^p \s)^{\frac{1}{t}} \right)^{t} |E_i(l)|\\
&\leq\sum_{l=0}^{\mu-1}\sum_{i=i_m(l)}^{i_{m+1}(l)-1}\int_{E_i(l)}M^\B\big((f^p\sigma)^{\frac1t}\big)^t \\
&\leq\int_{\R^n}M^\B\big((f^p\sigma)^{\frac1t}\big)^t\lesssim \int_{\R^n} f^p\sigma
\end{align*}
where we used that the sets $\{E_i(l)\}$ are pairwise disjoint and the maximal function $M^\B$ is bounded on $L^t$ for $t>1$.  This completes the proof of Theorem \ref{bumpHM}.
\end{proof}

 \section{Dyadic grids} \label{sect5}

We consider the specific case of our maximal operators working over a general dyadic grid $\mathsf D$.  A dyadic grid is a collection of cubes that satisfy the following properties:
\begin{itemize}
\item if $Q\in \D$ then $\ell(Q)=2^k$ for some $k\in \Z$;
\item if $Q,P\in \D$, then $Q\cap P\in \{\varnothing,Q,P\}$;
\item for each fixed $k\in\Z$ the set $\D_k=\{Q\in \D:\ell(Q)=2^k\}$ is a partition of $\R^n$.
\end{itemize}
The standard dyadic grid consists of cubes $Q$, open on the right, whose vertices are adjacent points of the lattice $(2^{-k}\Z)^{n}$.  Technically, a dyadic grid is not a basis since its members are not open sets.  However, we will treat the dyadic grid $\D$ as a basis, since the boundary of a cube has measure zero.  Given a dyadic grid $\D$ we define our respective operators accordingly:

$$M^{\D}_{-1}f(x)=\sup_{\substack{Q\in \mathsf D \\ x\in Q}}\left(\,\dashint_Q |f|^{-1}\,\right)^{-1}$$
and
$$M^{\D}_{0}f(x)=\sup_{\substack{Q\in \mathsf D \\ x\in Q}}\exp\left(\,\dashint_Q \log|f|\,\right).$$

It was stated in \cite[Section 1.4]{CU} that the doubling assumptions may be removed in higher dimensions if the harmonic and geometric maximal operators are changed to dyadic versions.  In \cite{CU} it is left to the reader to complete the details.  We now provide the details for the results in \cite{CU} for the dyadic harmonic and geometric maximal operators in higher dimensions without doubling assumptions on the weights.  Previously, the only known higher dimensional results that did not require doubling assumptions on the weights were for the centered harmonic operator \cite[Theorem 1.7]{CU}.  
\begin{theorem} \label{dyadic results}
Let $p$ be an exponent satisfying $0<p<\infty$ and $(u,\sigma)$ be a pair of weights. Then the following are equivalent 
\begin{enumerate}[(i)]
	\item the pair of weights $(u,\sigma)$ satisfies
		$$\dashint_Q u\,\leq C \left(\,\dashint_Q \sigma\,\right)^{p+1}$$
for all cubes $Q \in \mathsf D$;
	\item the operator $M^{\D}_{-1}$ satisfies the weak-type inequality
		$$u(\{x\in \R^n: M^\D_{-1}(f\sigma^{-1})(x)>\lambda\})\leq \frac{C}{\lambda^p}\int_{\R^n}|f|^p\sigma\,;$$
	\item the operator $M^{\D}_{-1}$ satisfies the strong-type inequality
		$$\int_{\R^n} M^\D_{-1}(f\sigma^{-1})^pu\,\leq C\int_{\R^n}|f|^p\sigma\,;$$
	\item the operator $M^{\D}_{-1}$ satisfies the testing condition
		$$\int_Q (M^\D_{-1}(\1_Q\sigma^{-1}))^pu\,\leq C\int_Q\sigma\,$$
for all cubes $Q \in \mathsf D$.  
\end{enumerate}
  
\end{theorem}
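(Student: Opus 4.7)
The plan is to close the cycle (iv) $\Rightarrow$ (iii) $\Rightarrow$ (ii) $\Rightarrow$ (i) $\Rightarrow$ (iv). The work is concentrated in (iv) $\Rightarrow$ (iii), where Theorem \ref{testingHM} does most of the job once its hypotheses are checked in the dyadic setting, and in (i) $\Rightarrow$ (iv), where a stopping-time construction replaces the doubling hypothesis used for general bases.

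For (iv) $\Rightarrow$ (iii) I would invoke Theorem \ref{testingHM} with $\B=\D$. Two points have to be verified. First, the hypothesis that $M_{-1,\sigma}^\D$ be bounded on $L^p(\sigma)$ is automatic for the dyadic basis: the Jensen-type bound $M_{-1,\sigma}^\D f\leq (M^\D_\sigma(f^r))^{1/r}$ combined with the classical fact that $M^\D_\sigma$ is bounded on $L^q(\sigma)$ for every $q>1$ and every weight $\sigma$ (dyadic Calder\'on--Zygmund decomposition) yields the claim upon choosing $0<r<p$. Second, Theorem \ref{testingHM} needs testing over every finite union of sets in $\B$, whereas (iv) provides testing only on individual cubes. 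To bridge this, I would write a finite union $F$ of dyadic cubes as a disjoint union of its maximal dyadic subcubes, $F=\bigsqcup_j Q_j$. By the nestedness of $\D$, for $x\in Q_j$ every dyadic $P\ni x$ either sits inside $Q_j$ (so $\dashint_P(\1_F\sigma^{-1})^{-1}=\dashint_P(\1_{Q_j}\sigma^{-1})^{-1}$) or strictly contains $Q_j$ and therefore pokes outside $F$ by maximality of $Q_j$, forcing the reciprocal average to vanish. Hence $M_{-1}^\D(\1_F\sigma^{-1})=M_{-1}^\D(\1_{Q_j}\sigma^{-1})$ on $Q_j$, and summing (iv) over $j$ yields the required testing over $F$.

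For (ii) $\Rightarrow$ (i) I would test the weak-type inequality on $f=\1_Q$ for a fixed $Q\in\D$. Taking the cube $Q$ itself in the defining supremum gives $M_{-1}^\D(\1_Q\sigma^{-1})(x)\geq (\dashint_Q\sigma)^{-1}$ on $Q$, while any dyadic cube strictly containing $Q$ pokes outside $Q$ and contributes zero; so $Q\subset\{M_{-1}^\D(\1_Q\sigma^{-1})>\lambda\}$ whenever $\lambda<(\dashint_Q\sigma)^{-1}$. Choosing $\lambda=\tfrac12(\dashint_Q\sigma)^{-1}$ in (ii) produces $u(Q)\leq C(\dashint_Q\sigma)^p\sigma(Q)=C|Q|(\dashint_Q\sigma)^{p+1}$, which is (i). The link (iii) $\Rightarrow$ (ii) is Chebyshev.

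The main new content is (i) $\Rightarrow$ (iv). Fix $Q\in\D$ and build a principal family $\mathcal P\subset\D$ with root $Q$: given $P\in\mathcal P$, declare the $\mathcal P$-children of $P$ to be the maximal dyadic $P'\subsetneq P$ with $\dashint_{P'}\sigma<\tfrac12\dashint_P\sigma$. Put $E(P)=P\setminus\bigcup\{P':P'\text{ is a $\mathcal P$-child of }P\}$, so the $E(P)$ partition $Q$ up to a set on which $u$ vanishes (by Lebesgue differentiation, (i) forces $\dashint_{P_k}u\to 0$ along any infinite principal descent). A maximality argument shows that for $x\in E(P)$ every dyadic $P''\subset Q$ containing $x$ satisfies $\dashint_{P''}\sigma\geq\tfrac12\dashint_P\sigma$: any smaller candidate with $\dashint_{P''}\sigma<\tfrac12\dashint_P\sigma$ would have been swallowed by a $\mathcal P$-child of $P$ containing $x$, and any ancestor of $P$ within $Q$ has $\dashint\sigma$ at least $\dashint_P\sigma$ by the factor-$2$ stopping rule going up the principal tree. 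Consequently $M_{-1}^\D(\1_Q\sigma^{-1})(x)\leq 2/\dashint_P\sigma$ on $E(P)$. Combining this pointwise bound with (i) in the form $u(P)\leq C|P|(\dashint_P\sigma)^{p+1}$ reduces the integral to
\[ \int_Q M_{-1}^\D(\1_Q\sigma^{-1})^p u \leq 2^p\sum_{P\in\mathcal P}(\dashint_P\sigma)^{-p}u(P) \leq 2^p C\sum_{P\in\mathcal P}|P|\dashint_P\sigma = 2^p C\sum_{P\in\mathcal P}\sigma(P). \]
The proof will finish by observing that $\mathcal P$ is $\sigma$-Carleson with constant $2$: for each $P$, $\sum_{P'\in\text{children}(P)}\sigma(P')<\tfrac12\dashint_P\sigma\cdot\sum|P'|\leq\tfrac12\sigma(P)$, and iterating down the tree yields $\sum_{P\in\mathcal P}\sigma(P)\leq 2\sigma(Q)$. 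The main obstacle I anticipate is a careful execution of the pointwise bound on $E(P)$, which forces one to track descendants and ancestors of $P$ inside $Q$ simultaneously.
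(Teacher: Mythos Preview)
Your argument is correct, but both of the non-trivial implications are handled by a different mechanism than in the paper.

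For (iv) $\Rightarrow$ (iii), the paper does not invoke Theorem \ref{testingHM} as a black box; it reruns the discretization argument directly in the dyadic setting, exploiting that the level sets can be decomposed into \emph{maximal} dyadic cubes $\{Q_i\}$, so that the testing condition on a single cube $Q_i$ suffices at each step. Your route---applying Theorem \ref{testingHM} after showing that testing over a finite union $F$ of dyadic cubes reduces, via the decomposition of $F$ into its maximal dyadic subcubes and the localization $M_{-1}^\D(\1_F\sigma^{-1})=M_{-1}^\D(\1_{Q_j}\sigma^{-1})$ on $Q_j$, to testing on individual cubes---is a clean alternative that avoids repeating the discretization. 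The paper obtains the boundedness of $M_{-1,\sigma}^\D$ on $L^p(\sigma)$ from Lemma \ref{dyadgeom} (the Hyt\"onen--P\'erez bound for $M_{0^+,\sigma}^\D$); your reduction to $M_\sigma^\D$ via $M_{-1,\sigma}^\D f\le (M_\sigma^\D |f|^r)^{1/r}$ is equally valid.

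For (i) $\Rightarrow$ (iv), the paper uses a layer-cake argument: it splits $\int_Q M_{-1}^\D(\1_Q\sigma^{-1})^p u$ at the level $R=|Q|/\sigma(Q)$, bounds the low-level part by $u(Q)R^p\le C\sigma(Q)$ directly from (i), and for the high-level part decomposes $E_\lambda$ into maximal dyadic cubes $Q_i^\lambda$ with $\dashint_{Q_i^\lambda}\sigma<\lambda^{-1}$, applies (i) to each, and integrates in $\lambda$. Your stopping-time/principal-cube construction with the Carleson packing $\sum_{P\in\mathcal P}\sigma(P)\le 2\sigma(Q)$ is a genuinely different proof of the same implication, closer in spirit to sparse domination arguments. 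Both approaches hinge on the nestedness of $\D$, but yours organizes the cubes spatially (by stopping generations) while the paper's organizes them by level (by $\lambda$). The paper's argument is slightly shorter because it sidesteps the verification that the residual set $Q\setminus\bigcup_{P\in\mathcal P}E(P)$ has $u$-measure zero.
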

The removal of the doubling condition relies wholly on a specific geometric property of the cubes in $\mathsf D$. The property is that any two cubes in $\mathsf D$ are either nested or disjoint. This well-known property allows us to use the universal maximal operators with respect to a weight $\sigma$:
$$M^\D_{-1,\sigma}f(x)=\sup_{\substack{Q\in \D \\ x\in Q}}\left(\frac{1}{\sigma(Q)}\int_Q |f|^{-1}\sigma\right)^{-1}$$
and
$$M^\D_{0,\sigma}f(x)=\sup_{\substack{Q\in \D \\ x\in Q}}\exp\left(\frac{1}{\sigma(Q)}\int_Q (\log|f|) \sigma\right).$$
Finally we introduce one more limiting operator:
$$M^\D_{0^+,\sigma}f=\lim_{r\ra0^+} M^\D_{r,\sigma}f=\lim_{r\ra 0}M_\sigma^\D(|f|^r)^\frac1r.$$
It is clear that for any power $r>0$ we have
$$M_{-r,\sigma}^\D f\leq M_{0^-,\sigma}^\D f\leq M_{0,\sigma}^\D f\leq M_{0^+,\sigma}^\D f \leq M_{r,\sigma}^\D.$$
We will make use of the following lemma from Hyt\"onen and P\'erez \cite[Lemma 2.1]{HyP}.

\begin{lemma} \label{dyadgeom}
Let $\sigma$ be a weight and $0<p<\infty$.  Then $M^\D_{0^+,\s}$ is bounded on $L^p(\s)$ and 
$$\| M^{\D}_{0^+,\s} \|_{L^p(\s)\ra L^p(\s)} \le e^{\frac{1}{p}}.$$
\end{lemma}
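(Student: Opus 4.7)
The plan is to reduce to the sharp $L^q(\sigma)$ bound for the dyadic maximal operator $M^\D_\sigma$ through the identity $M^\D_{r,\sigma}f = M^\D_\sigma(|f|^r)^{1/r}$, and then let $r\to 0^+$. Because any two cubes in $\D$ are either nested or disjoint, $M^\D_\sigma$ is precisely the Doob maximal function of the martingale of $\sigma$-averages on the filtration $\{\D_k\}_{k\in\Z}$. Doob's inequality (equivalently, the weak-$(1,1)$ bound with constant one produced by disjointness of the level sets, interpolated with the trivial $L^\infty$ bound via Marcinkiewicz) then gives
\[
\|M^\D_\sigma g\|_{L^q(\sigma)} \le q'\,\|g\|_{L^q(\sigma)}, \qquad 1<q<\infty.
\]

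Fix $p>0$ and any $r\in(0,p)$. Applying this with $g=|f|^r$ and $q=p/r>1$ yields
\[
\|M^\D_{r,\sigma}f\|_{L^p(\sigma)}^p = \|M^\D_\sigma(|f|^r)\|_{L^{p/r}(\sigma)}^{p/r} \le \left(\tfrac{p}{p-r}\right)^{p/r}\|f\|_{L^p(\sigma)}^p,
\]
so that
\[
\|M^\D_{r,\sigma}f\|_{L^p(\sigma)} \le \left(\tfrac{p}{p-r}\right)^{1/r}\|f\|_{L^p(\sigma)}.
\]
By Jensen's inequality the quantity $\bigl(\tfrac{1}{\sigma(Q)}\int_Q|f|^r\sigma\bigr)^{1/r}$ is nondecreasing in $r>0$, hence $M^\D_{r,\sigma}f\searrow M^\D_{0^+,\sigma}f$ pointwise as $r\to 0^+$, and the same norm estimate therefore holds with $M^\D_{0^+,\sigma}f$ in place of $M^\D_{r,\sigma}f$ for every $r\in(0,p)$.

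To finish one takes the infimum over $r$ and uses the elementary calculus limit $\lim_{r\to 0^+}(1-r/p)^{-1/r}=e^{1/p}$, which follows from $-\log(1-r/p)=r/p+O(r^2)$. The only real delicacy is that the sharp $L^q(\sigma)$ constant must be exactly $q'$: any looser constant of the form $Cq'$ with $C>1$, once raised to the power $1/r=q/p$, would blow up as $r\to 0^+$ and produce no bound at all. Thus the precision of Doob's inequality is indispensable, but granted it the rest is a one-line calculation.
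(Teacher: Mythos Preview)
Your argument is correct and is essentially the proof of the cited source: the paper does not prove this lemma itself but quotes it from Hyt\"onen and P\'erez \cite[Lemma~2.1]{HyP}, where the proof proceeds exactly as you outline---substitute the sharp Doob bound $\|M^\D_\sigma g\|_{L^q(\sigma)}\le q'\|g\|_{L^q(\sigma)}$ with $q=p/r$, raise to the $1/r$, and let $r\to 0^+$.

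One small caveat: your parenthetical remark that the constant $q'$ can be obtained by Marcinkiewicz interpolation between the weak-$(1,1)$ and the $L^\infty$ endpoints is not quite right; Marcinkiewicz loses a constant factor, and as you yourself stress, anything worse than exactly $q'$ blows up under the $1/r$-th power.  The sharp $q'$ comes instead from the refined level-set inequality $\lambda\,\sigma(\{M^\D_\sigma g>\lambda\})\le\int_{\{M^\D_\sigma g>\lambda\}}|g|\,\sigma$ (disjoint maximal dyadic cubes) followed by a direct layer-cake computation, which is the classical proof of Doob's inequality.  With that adjustment your write-up is complete.
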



\begin{proof}[Proof of Theorem \ref{dyadic results}]

We will prove that (iv) implies (iii) and (i) implies (iv), the other implications follow from standard arguments (see \cite{CU}).  First we will prove (iv) implies (iii).  Again we will suppose that $f$ is supported on a fixed cube and $f>0$ on that cube $Q_0$.  For each integer $k$, let $A_k = \{ x \in \R^n : 2^k < M^\D_{-1} f(x) \le 2^{k+1} \}$. Let $\Sc_k$ be the set of cubes $Q\in \D$ that are maximal with respect to inclusion and satisfy
	$$2^k < \left(\dashint_{Q} f^{-1} \right)^{-1}.$$
 Then each $Q$ in $\Sc_k$ is contained in $Q_0$ and we also have $A_k\subset \bigcup_{Q\in \Sc_k}Q$.  Define $\Sc=\bigcup_k \Sc_k$.  Moreover, given $Q\in \Sc_k$ define $E(Q)=Q\cap A_k$.  Since the cubes $Q\in \Sc_k$ are disjoint for each $k$ and the families $A_k$ are disjoint in $k$, the family $\{E(Q)\}_{Q\in \Sc}$ will be pairwise disjoint and satisfy $A_k=\bigcup_{Q\in \Sc^k}E(Q).$
 
 Then we have
	
\begin{align*}
\int_{\R^n} \left( M^\D_{-1}(f\s^{-1}) \right)^p u\, dx &= \sum_k \int_{A_k} \left( M^\D_{-1}f \right)^p u\, dx\\
&\le \sum_k u(A_k) 2^{p(k+1)}\\
&\lesssim \sum_{Q\in \Sc}\left(\dashint_Q f^{-1}\s\right)^{-p} u(E(Q))\\
&=\sum_{Q\in \Sc}\left(\frac{1}{\s(Q)}\int_Q f^{-1}\s\right)^{-p} \left(\frac{|Q|}{\s(Q)}\right)^{p}u(E(Q))\\
&=\int_0^\infty \mu(\{ Q\in \Sc: F(Q)>\lambda\})\,d\lambda
\end{align*}
where for $Q\in \Sc$, 
$$\mu(Q)=\left(\frac{|Q|}{\s(Q)}\right)^{p}u(E(Q)), \quad \text{and} \quad F(Q)=\left(\frac{1}{\s(Q)}\int_Q f^{-1}\s\right)^{-p}.$$
We have
$$\bigcup_{\substack{Q\in \Sc \\ F(Q)>\lambda}} Q\subset \{x: M_{-1,\sigma}^\D f(x)>\lambda\}.$$
Moreover, if we let $\{Q_i\}$ be the set of maximally dyadic cubes in the set $\{Q\in \Sc:F(Q)>\lambda\}$ then this will for a pairwise disjoint set. Using the testing condition (iv) we see that 
\begin{align*}
\int_0^\infty \mu(\{ Q\in \Sc: F(Q)>\lambda\})\,d\lambda &=\int_0^\infty \sum_{\substack{Q\in \Sc \\ F(Q)>\lambda}} \mu(Q) \,d\lambda\\
 &=\int_0^\infty\sum_{\substack{Q\in \Sc \\ F(Q)>\lambda}}\left(\frac{|Q|}{\s(Q)}\right)^{p}u(E(Q)) d\lambda\\
 &=\int_0^\infty \sum_{i}\sum_{\substack{Q\in \Sc \\ Q\subseteq Q_i}}\left(\frac{|Q|}{\s(Q)}\right)^{p}u(E(Q))d\lambda\\
&\leq \int_0^\infty \sum_{i}\sum_{\substack{Q\in \Sc \\ Q\subseteq Q_i}}\left(\int_{E(Q)}M_{-1}^\D(\1_{Q_i}\sigma^{-1})^pu\right)d\lambda\\
&\lesssim\int_0^\infty \sum_{i}\sigma(Q_i)d\lambda\\
&=\int_0^\infty \sigma(\{x: M_{-1,\sigma}^\D f(x)>\lambda\})\,d\lambda\\
&=\int_{\R^n}(M_{-1,\sigma}^\D f)^p\,\sigma\\
&\lesssim \int_{\R^n} f^p\sigma.
\end{align*}
The limiting argument to remove the support condition finishes the proof of (iv) implies (iii).   We have also used the fact that $M_{-1,\sigma}^\D$ is bounded on $L^p(\sigma)$ (see Lemma \ref{dyadgeom}).  We now prove that (i) implies (iv).  Let $Q\in \D$, we may assume $\sigma(Q)>0$, since otherwise by (i) we have $u(Q)=0$.  For each $\lambda>0$ let 
$$E_\lambda=\{x\in Q:M_{-1}^\D(\1_Q\sigma^{-1})(x)>\lambda\}.$$
Furthermore, let $R=\frac{|Q|}{\sigma(Q)}$.  Then
\begin{multline*}
\int_Q M_{-1}^\D(\1_Q\sigma^{-1})^p u=p\int_0^R\lambda^{p-1}u(E_\lambda)\,d\lambda+p\int_R^\infty\lambda^{p-1}u(E_\lambda)\,d\lambda \\
=I+II
\end{multline*}
The first term is easy to estimate:
$$I=p\int_0^R\lambda^{p-1}u(E_\lambda)\,d\lambda\leq u(Q)\left(\frac{|Q|}{\sigma(Q)}\right)^p\leq C\sigma(Q).$$
For the second term let $\{Q^\lambda_i\}$ be the collection of maximal dyadic cubes such that 
$$\left(\dashint_{Q_i} \sigma\right)^{-1}>\lambda$$
so that $E_\lambda=\bigcup_i Q^\lambda_i$. Then 
\begin{align*}
II&=p\int_R^\infty \lambda^{p-1}u(E_\lambda)\,d\lambda\\
&=p\int_R^\infty \lambda^{p-1}\sum_i u(Q^\lambda_i)\,d\lambda \\
&\lesssim p\int_R^\infty \lambda^{p-1}\sum_i |Q^\lambda_i|\left(\,\dashint_{Q_i^\lambda} \sigma\right)^{p+1}\,d\lambda \\
&\leq p\int_R^\infty \lambda^{-2}\sum_i |Q^\lambda_i|\,d\lambda \\
&\lesssim p|Q| R^{-1}=p\,\sigma(Q).
\end{align*}
This finishes the proof.
\end{proof}

Finally we end our discussion with the statement of specific results for the dyadic geometric maximal operator.  These results were alluded to in \cite{CU}. Using Lemma \ref{approx} for nice functions we have
$$M_{0^-}^{\D} f (x) = M_{0}^\D f(x)$$
where $ M^\D_{0^-} f(x) =\lim_{r\ra 0^+}M_{-r}^\D f(x)$.  We can extend our results to the geometric maximal operator for both the weak and the strong inequalities.  We do not include the proofs as they are similar to that found in \cite{CN2}.

\begin{theorem}\label{geomdyadweak}
Suppose $(u,v)$ is a pair of weights defined on $\R^n$.  Then the following are equivalent:
\begin{enumerate}[(i)]
\item The weak $(p,p)$ inequalities
$$u(\{x\in \R^n:M^\D_0f(x)>\lambda\})\leq \frac{C}{\lambda^p}\int_{\R^n} |f|^pv\,$$
hold for all $0<p<\infty$ and $f\in L^p(v)$,
\item The weak $(1,1)$ inequality  
$$u(\{x\in \R^n:M^\D_0(v^{-1}f)(x)>\lambda\})\leq \frac{C}{\lambda}\int_{\R^n} |f|\,$$
holds for all $f\in L^1(\R^n)$,
\item the pair $(u,v)$ satisfies the two weight $A_\infty$ condition
$$\sup_{Q\in \D} \left(\dashint_Q u\,\right)\exp\left(-\dashint_Q \log v\,\right)<\infty.$$ 
\end{enumerate}
\end{theorem}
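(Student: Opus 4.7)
The plan is to establish (i) $\Leftrightarrow$ (ii), (iii) $\Rightarrow$ (ii), and (ii) $\Rightarrow$ (iii). Rather than imitating \cite{CN2} by taking $r\to 0^+$ in Theorem \ref{dyadic results} (which would require uniform-in-$p$ control of the weak-type constant for $M_{-1}^\D$), I would work directly with $M_0^\D$ by exploiting the Calder\'on--Zygmund structure of the dyadic grid.

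For (i) $\Leftrightarrow$ (ii) the key identity is the homogeneity $M_0^\D(h^p)=(M_0^\D h)^p$ for $h\ge 0$ and $p>0$. Given $f\in L^1$ as in (ii), set $h=(v^{-1}f)^{1/p}$; then $|h|^pv=f$, $\|h\|_{L^p(v)}^p=\int_{\R^n}f$, and $\{M_0^\D(v^{-1}f)>\lambda\}=\{M_0^\D h>\lambda^{1/p}\}$, so applying (i) at exponent $p$ to $h$ recovers (ii). The reverse substitution $h=v|f|^p\in L^1$ into (ii) gives (i) at every $p$.

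For (iii) $\Rightarrow$ (ii) I would run a dyadic Calder\'on--Zygmund decomposition of $M_0^\D$. Assume $f\ge 0$ and set $E_\lambda=\{M_0^\D(v^{-1}f)>\lambda\}$, written as the disjoint union of maximal dyadic cubes $\{Q_i\}$ satisfying $\exp(\dashint_{Q_i}\log(v^{-1}f))>\lambda$. Jensen's inequality gives
$$\lambda<\exp\Bigl(\dashint_{Q_i}\log f\Bigr)\exp\Bigl(-\dashint_{Q_i}\log v\Bigr)\le\Bigl(\dashint_{Q_i}f\Bigr)\exp\Bigl(-\dashint_{Q_i}\log v\Bigr),$$
so $|Q_i|\exp(\dashint_{Q_i}\log v)<\lambda^{-1}f(Q_i)$. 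Hypothesis (iii) then produces $u(Q_i)\le A|Q_i|\exp(\dashint_{Q_i}\log v)<A\lambda^{-1}f(Q_i)$, and summing over the disjoint $Q_i$ yields $u(E_\lambda)\le A\lambda^{-1}\int_{\R^n}f$. For (ii) $\Rightarrow$ (iii) I would test (ii) on $f=\1_Q$ with $Q\in\D$: since $\log(v^{-1}\1_Q)=-\log v$ on $Q$, taking the cube $Q$ itself in the supremum defining $M_0^\D$ gives $M_0^\D(v^{-1}\1_Q)(x)\ge\exp(-\dashint_Q\log v)$ for every $x\in Q$. Hypothesis (ii) applied to $\lambda<\exp(-\dashint_Q\log v)$ yields $u(Q)\le C\lambda^{-1}|Q|$, and letting $\lambda\uparrow\exp(-\dashint_Q\log v)$ produces $(\dashint_Q u)\exp(-\dashint_Q\log v)\le C$.

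The step I expect to be the main obstacle is the existence of maximal cubes in the Calder\'on--Zygmund decomposition when $v$ is allowed to vanish: $v^{-1}f$ need not lie in $L^1$ even though $f$ does, so the averages $\exp(\dashint_Q\log(v^{-1}f))$ need not decay along ascending chains of dyadic cubes, and the decomposition is not automatic. The standard workaround is to restrict the supremum in $M_0^\D$ to dyadic cubes of side length at most $2^N$, decompose the truncated level set into maximal cubes from that finite collection with constant $A$ independent of $N$, and then let $N\to\infty$ by monotone convergence.
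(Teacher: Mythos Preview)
Your argument is correct. The paper does not actually write out a proof of this theorem; it merely refers the reader to \cite{CN2}, whose method is to approximate $M_0^\D$ from below by the operators $M_{-r}^\D$ (via Lemma~\ref{approx}) and to pass to the limit $r\to 0^+$ in the two-weight theory for $M_{-1}^\D$ established in Theorem~\ref{dyadic results}. Your approach is genuinely different: you work directly with $M_0^\D$, using only the homogeneity $M_0^\D(h^p)=(M_0^\D h)^p$ for (i)$\Leftrightarrow$(ii), a dyadic Calder\'on--Zygmund stopping-time plus Jensen for (iii)$\Rightarrow$(ii), and a test on $f=\1_Q$ for (ii)$\Rightarrow$(iii).

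What this buys you is that you never have to track how the weak-type constant for $M_{-1}^\D$ at exponent $p/r$ behaves as $r\to 0^+$, nor do you need the local integrability hypothesis on $f^{-r}$ that Lemma~\ref{approx} requires; the argument is entirely self-contained and more elementary. The referenced approach, on the other hand, recycles the full harmonic theory and is more systematic if one already has Theorem~\ref{dyadic results} in hand. Your identification of the one genuine technical point---existence of maximal stopping cubes when $\log v$ need not be locally integrable---and your fix via truncation of the sidelengths at $2^N$ followed by monotone convergence are both correct; this is exactly the standard remedy.
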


\begin{theorem}\label{geomdyadstrong}
Suppose $(u,v)$ is a pair of weights defined on $\R^n$.  Then the following are equivalent:
\begin{enumerate}[(i)]
\item the inequalities
$$\int_{\R^n} (M^\D_0 f)^pu\leq C\int_{\R^n} |f|^pv\,$$
hold for all $0<p<\infty$ and $f\in L^p(v)$,
\item the  inequality  
$$\int_{\R^n}M^\D_0(v^{-1}f) u\leq {C}\int_{\R^n} |f|\,$$
holds for all $f\in L^1(\R^n)$,
\item the testing condition
$$\int_Q M_0^\D (v^{-1}\1_Q)u\,\leq C|Q|$$
holds for all cubes $Q \in \mathsf D$.  
\end{enumerate}

\end{theorem}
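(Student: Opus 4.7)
Proof proposal:

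The implications $(\text{i}) \Leftrightarrow (\text{ii})$ and $(\text{ii}) \Rightarrow (\text{iii})$ are routine consequences of the identity $(M^\D_0 g)^p = M^\D_0(g^p)$ for $g \ge 0$ (which reduces (i) at any single exponent to (ii) via the substitution $f = v^{-1/p}g^{1/p}$) and testing against $f = \1_Q$. My plan for the substantive step $(\text{iii}) \Rightarrow (\text{ii})$ is a limiting argument in the spirit of Cruz-Uribe--Neugebauer \cite{CN2}: I will obtain (ii) as the limit, $p \to \infty$, of strong-type inequalities for $M^\D_{-1}$ supplied by Theorem \ref{dyadic results}.

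The first step is to distill from (iii) a two-weight $A_\infty$ estimate: since $M^\D_0(v^{-1}\1_Q)(x) \ge \exp(\dashint_Q \log v^{-1})$ for $x \in Q$, the testing condition gives
$$\dashint_Q u \,\le\, C \exp\Big(\dashint_Q \log v\Big), \qquad Q \in \D.$$
By the power-mean inequality, $\exp(\dashint_Q \log v) \le (\dashint_Q v^{1/(p+1)})^{p+1}$ for every $p > 0$, the right-hand side decreasing to the left-hand side as $p\to\infty$. Setting $\sigma_p = v^{1/(p+1)}$, condition (i) of Theorem \ref{dyadic results} for the pair $(u,\sigma_p)$ at exponent $p$ then holds with a constant independent of $p$, and the theorem produces the corresponding strong-type inequality $\int (M^\D_{-1}(h \sigma_p^{-1}))^p u \le C_p \int h^p \sigma_p$.

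Substituting $h = g^{1/(p+1)}$ and using the identity $M^\D_{-1}(f^r) = (M^\D_{-r} f)^r$ with $r = 1/(p+1)$, the strong-type inequality rewrites as
$$\int_{\R^n} M^\D_{-1/(p+1)}(v^{-1} g)^{p/(p+1)}\, u \,\le\, C_p \int_{\R^n} g^{p/(p+1)}\, v^{1/(p+1)}.$$
As $p \to \infty$: $p/(p+1) \to 1$; the operators $M^\D_{-1/(p+1)}$ increase pointwise to $M^\D_0$ by Lemma \ref{approx}; and $v^{1/(p+1)} \to 1$ a.e.\ on $\{v > 0\}$ with the domination $v^{1/(p+1)} \le \max(1, v)$ for $p \ge 0$. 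Provided $C_p$ remains bounded, Fatou's lemma on the left and dominated convergence on the right yield $\int M^\D_0(v^{-1}g)\, u \le C \int g$, which is (ii).

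The main obstacle is uniformity of $C_p$ as $p \to \infty$. Tracing through the proof of Theorem \ref{dyadic results}, $C_p$ decomposes into (a) the $A$-constant from its condition (i), uniform by the step above; (b) the operator norm $\|M^\D_{-1,\sigma_p}\|_{L^p(\sigma_p) \to L^p(\sigma_p)}$, bounded by $e^{1/p} \le e$ via Lemma \ref{dyadgeom}; and (c) combinatorial factors from the dyadic level-set decomposition in that proof, which grow like $2^p$ in the naive version but become universal once the base $2$ is replaced by $\beta = 2^{1/p}$ in the decomposition (so that the unavoidable factor $\beta^p = 2$ is independent of $p$), together with an analogous sharpening in the $(\text{i}) \Rightarrow (\text{iv})$ step of that proof that removes the superficial linear-in-$p$ loss. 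Once these constants are stabilized, the limiting argument above completes the proof.
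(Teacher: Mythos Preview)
Your overall strategy---passing to the limit $p\to\infty$ through the strong-type inequalities for $M^\D_{-1}$ furnished by Theorem~\ref{dyadic results}, with constants tracked carefully---is exactly the route indicated by the paper (which defers to \cite{CN2}). The equivalences $(\mathrm{i})\Leftrightarrow(\mathrm{ii})$, $(\mathrm{ii})\Rightarrow(\mathrm{iii})$, the use of Lemma~\ref{approx} for the pointwise limit, and the observation that Lemma~\ref{dyadgeom} controls $\|M^\D_{-1,\sigma_p}\|_{L^p(\sigma_p)}$ uniformly are all correct.

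There is, however, a genuine gap. From the testing hypothesis (iii) you extract only its weakest consequence, the two-weight $A_\infty$ bound $\dashint_Q u\le C\exp(\dashint_Q\log v)$, and then feed this into condition (i) of Theorem~\ref{dyadic results}. The passage $(\mathrm{i})\Rightarrow(\mathrm{iv})$ in that theorem is where the factor of $p$ enters (from $II\lesssim p\,\sigma(Q)$ in the paper's proof), and your claim that this ``superficial linear-in-$p$ loss'' can be sharpened away is incorrect: if it could, your argument would show that the two-weight $A_\infty$ condition \emph{alone} implies the strong-type inequality for $M^\D_0$, collapsing Theorem~\ref{geomdyadstrong} into Theorem~\ref{geomdyadweak}. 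That is false already on $\R$ (cf.\ \cite{CN2,YM}), where $A_\infty$ characterizes only the weak type and the testing condition is strictly stronger.

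The repair is to use the \emph{full} testing condition (iii), not merely its $A_\infty$ shadow, to verify condition (iv) of Theorem~\ref{dyadic results} directly with a $p$-free constant. Since $M^\D_{-1}(\1_Q\sigma_p^{-1})=\big(M^\D_{-1/(p+1)}(v^{-1}\1_Q)\big)^{1/(p+1)}\le \big(M^\D_0(v^{-1}\1_Q)\big)^{1/(p+1)}$, H\"older's inequality with exponent $(p+1)/p$ gives
\[
\int_Q M^\D_{-1}(\1_Q\sigma_p^{-1})^p\,u \;\le\; \Big(\int_Q M^\D_0(v^{-1}\1_Q)\,u\Big)^{p/(p+1)} u(Q)^{1/(p+1)} \;\le\; (C|Q|)^{p/(p+1)}\,u(Q)^{1/(p+1)},
\]
and then the $A_\infty$ bound $u(Q)\le C|Q|^{-p}\sigma_p(Q)^{p+1}$ yields $\int_Q M^\D_{-1}(\1_Q\sigma_p^{-1})^p\,u\le C\,\sigma_p(Q)$ with $C$ independent of $p$. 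Now invoke $(\mathrm{iv})\Rightarrow(\mathrm{iii})$ of Theorem~\ref{dyadic results}---whose constants \emph{are} uniform once you use base $2^{1/p}$ in the level-set decomposition together with Lemma~\ref{dyadgeom}---and let $p\to\infty$ as you outlined.
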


\bibliographystyle{plain}

\end{document}